\def\Autoref#1{%
	\begingroup
	\edef\reserved@a{\cpttrimspaces{#1}}%
	\ifcsndefTF{r@#1}{%
		\xaftercsname{\expandafter\testreftype\@fourthoffive}
		{r@\reserved@a}.\\{#1}%
	}{%
		\ref{#1}%
	}%
	\endgroup
}
\def\testreftype#1.#2\\#3{%
	\ifcsndefTF{#1autorefname}{%
		\def\reserved@a##1##2\@nil{%
			\uppercase{\def\ref@name{##1}}%
			\csn@edef{#1autorefname}{\ref@name##2}%
			\autoref{#3}%
		}%
		\reserved@a#1\@nil
	}{%
		\autoref{#3}%
	}%
}
\theoremstyle{plain}
\newtheorem{theorem}{Theorem}[section]
\newtheorem{thmx}{Theorem}
\newtheorem{corollary}[theorem]{Corollary}
\newtheorem{lemma}[theorem]{Lem\-ma}
\newtheorem{proposition}[theorem]{Prop\-o\-si\-tion}
\theoremstyle{definition}
\newtheorem{remark}[theorem]{Remark}
\newtheorem{example}[theorem]{Example}
\numberwithin{equation}{section}
\newcommand{\N}[0]{\mathbb{N}}
\newcommand{\Z}[0]{\mathbb{Z}}
\newcommand{\Q}[0]{\mathbb{Q}}
\newcommand{\R}[0]{\mathbb{R}}
\newcommand{\OO}[0]{\mathcal{O}}
\newcommand{\MM}[0]{\mathcal{M}}
\newcommand{\pow}[1]{((#1))}
\newcommand{\Lor}[0]{\mathcal{L}_{\mathrm{or}}}
\newcommand{\Log}[0]{\mathcal{L}_{\mathrm{og}}}
\newcommand{\Lr}[0]{\mathcal{L}_{\mathrm{r}}}
\newcommand{\Lvf}[0]{\mathcal{L}_{\mathrm{vf}}}
\newcommand{\Lovf}[0]{\mathcal{L}_{\mathrm{ovf}}}
\newcommand{\ol}[1]{\overline{#1}}
\newcommand{\ul}[1]{\underline{#1}}
\newcommand{\vnat}[0]{v_{\mathrm{nat}}}
\newcommand{\LL}[0]{\mathcal{L}}
\newcommand{\KK}[0]{\mathcal{K}}
\DeclareMathOperator{\tp}{tp}
\DeclareMathOperator{\mf}{\mathbf{f}}
\DeclareMathOperator{\mr}{\mathbf{r}}
\DeclareMathOperator{\mv}{\mathbf{v}}
\title[Definable valuations on ordered fields]{Definable valuations on ordered fields}
\author[P.~Dittmann]{Philip Dittmann}
\author[F.~Jahnke]{Franziska Jahnke}
\author[L.~S.~Krapp]{Lothar Sebastian Krapp}
\author[S.~Kuhlmann]{Salma Kuhlmann}
\address{Institut für Algebra\\Fachrichtung Mathematik\\TU Dresden\\01062 Dresden, Germany}
\email{philip.dittmann@tu-dresden.de}
\urladdr{https://tu-dresden.de/mn/math/algebra/das-institut/beschaeftigte/philip-dittmann}
\address{Mathematisches Institut\\Fachbereich Mathematik und Informatik\\Universität Münster\\Einsteinstraße 62\\48149 Münster, Germany}
\email{franziska.jahnke@uni-muenster.de}
\urladdr{https://ivv5hpp.uni-muenster.de/u/fjahn\_01/}
\thanks{The second author was funded by the Deutsche Forschungsgemeinschaft (DFG, German Research Foundation) under Germany's Excellence Strategy EXC 2044-390685587, Mathematics M\"unster: Dynamics-Geometry-Structure, as well as by a Fellowship from the Daimler and Benz Foundation.}
\address{Fachbereich Mathematik und Statistik\\Universität Konstanz\\78457 Konstanz, Germany}
\email{sebastian.krapp@uni-konstanz.de}
\urladdr{http://www.math.uni-konstanz.de/\urltilde krapp/}
\thanks{The third author was partially supported by Werner und Erika Messmer Stiftung.}
\address{Fachbereich Mathematik und Statistik\\Universität Konstanz\\78457 Konstanz, Germany}
\email{salma.kuhlmann@uni-konstanz.de}
\urladdr{https://www.mathematik.uni-konstanz.de/kuhlmann/}
\thanks{The fourth author wishes to acknowledge support from the AFF Universität Konstanz for the Project TRAGVAL: Topics in Real Algebraic Geometry and Valuation Theory.}
\thanks{Part of this work was carried out while the third and fourth authors were generously hosted by the Fields Institute, in Toronto, during the month of June 2022.}
\begin{document}
	
	\begin{abstract}
		We study the definability of convex valuations on ordered fields, with a particular focus on the distinguished subclass of henselian valuations. In the setting of ordered fields, one can consider definability both in the language of rings $\Lr$ and in the richer language of ordered rings $\Lor$. We analyse and compare definability in both languages and show the following contrary results: while there are \emph{convex} valuations that are definable in the language $\Lor$ but not in the language $\Lr$, any $\Lor$-definable \emph{henselian} valuation  is already $\Lr$-definable. To prove the latter, we show that the value group and the \emph{ordered} residue field of an ordered henselian valued field are stably embedded (as an ordered abelian group, respectively as
		an ordered field). Moreover, we show that in almost real closed fields \emph{any} $\Lor$-definable valuation is  henselian.
	\end{abstract}
	
	\maketitle
	
	\section{Introduction}
	
	One of the main objectives in the model-theoretic study of fields is the analysis of first-order definable\footnote{Throughout this work, `definable' always means `definable \emph{with} parameters'.} sets and substructures. 
	Given a field, it is a natural question to ask whether a given valuation ring is a definable subset in some expansion of the language $\Lr=\{+,-,\cdot, 0, 1\}$ of rings.
	A key reason to study definability of valuation rings 
	is to transfer questions of decidability and existential decidability (i.e., the
	question whether Hilbert's Tenth Problem 
	has a positive solution)
	between different rings and fields. 
	However, there is also a more recent motivation stemming from
	classifying fields within Shelah's classification hierarchy:
	whereas stable (or, more generally, simple) fields do not admit
	any non-trivial 
	$\Lr$-definable 
	valuations, a conjecture going back to Shelah
	predicts
	that infinite NIP fields which are neither real closed nor separably 
	closed admit a non-trivial $\Lr$-definable henselian valuation.
	In a recent series of spectacular papers, this was shown to hold
	in the `finite-dimensional' (i.e., dp-finite) case by Johnson
	\cite{johnson}.
	For a survey on definability of henselian valuations, mostly 
	in the language of rings, see \cite{fehmjahnkesurvey}.
	
	In this work, we primarily study valuations on \emph{ordered} fields.
	This allows us to also consider their definability in the language of \emph{ordered} rings $\Lor = \Lr\cup\{<\}$. We focus on convex valuations, i.e., valuations whose valuation ring is convex with respect to the ordering, as these naturally induce an ordering on the residue field (cf.\ \cite[Proposition~2.2.4]{englerprestel}). Note that due to 
	\cite[Lemma~4.3.6]{englerprestel}, 
	every henselian valuation on an ordered field is already convex.
	
	By considering the expanded language $\Lor$ rather than $\Lr$, one may expect further definability results. Indeed, we present examples of ordered fields with \emph{convex} valuations that are $\Lor$-definable but not $\Lr$-definable (see \Autoref{ex:nonlrdef1} and \Autoref{ex:nonlrdef2}). Rather surprisingly, for \emph{henselian} valuations the language $\Lor$ does not produce any further definability results, that is, every $\Lor$-definable henselian valuation is already $\Lr$-definable (see \Autoref{thm:main}). In the particular case of almost real closed fields, $\Lor$-definability even suffices to ensure both henselianity (thus convexity) and $\Lr$-definability (see \Autoref{thm:arcmain}).
	
	The structure of this paper is as follows.
	After introducing preliminary notions and results in \Autoref{sec:prelims}, we first turn to the definability of convex valuations in \Autoref{sec:convex}. We establish conditions on the value group and the residue field ensuring the definability (\emph{with} and \emph{without} parameters) of a given convex valuation (see \Autoref{thm:convexmain} and \Autoref{cor:convexmain}).
	Subsequently we compare these results to other known definability conditions in the literature (see \Autoref{rmk:comparison}) and construct our main examples \Autoref{ex:nonlrdef1} and \Autoref{ex:nonlrdef2} to show that there are convex valuations that are $\emptyset$-$\Lor$-definable but not $\Lr$-definable.
	Lastly, we answer \cite[Question~7.1]{krappkuhlmannlehericyforum} positively by presenting in  \Autoref{ex:density} an ordered valued field that is dense in its real closure but still admits a non-trivial $\emptyset$-$\Lor$-definable convex valuation.
	In \Autoref{sec:qe} we turn to ordered \emph{henselian} valued fields and establish in \Autoref{thm:stablemb} that their value group (as an ordered abelian group) and their residue field (as an ordered field) are always stably embedded as well as orthogonal. As a result, we obtain that within ordered fields, every $\Lor$-definable coarsening of an $\Lr$-definable henselian valuation is already $\Lr$-definable (see \Autoref{cor:coarseninglr}) and that any $\Lor$-definable (not necessarily convex) valuation is comparable to any henselian valuation (see \Autoref{prop:ord-hens-comp}). The special class of almost real closed fields, which are ordered fields admitting a henselian valuation with real closed residue field, is studied in \Autoref{sec:arc}. We show in \Autoref{thm:arcmain} that within almost real closed fields any $\Lor$-definable valuation (which \emph{a priori} does not have to be convex) is already henselian and $\Lr$-definable, thereby giving a negative answer to \cite[Question~7.3]{krappkuhlmannlehericyforum}. Building on the results of the previous sections, we finally prove in \Autoref{sec:henselian} the main theorem of this paper stating as follows:\footnote{This result will be restated as \Autoref{thm:main}.}

	\begin{thmx}[Main Theorem]
		Let $(K,<)$ be an ordered field and let $v$ be a henselian valution on $K$. If $v$ is $\Lor$-definable, then it is already $\Lr$-definable.
	\end{thmx}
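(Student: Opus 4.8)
The plan is to reduce the statement, via the structure theory of henselian valuations, to the two cases already settled by \Autoref{cor:coarseninglr} and \Autoref{thm:arcmain}. We may assume $v$ is non-trivial, since the trivial valuation is $\Lr$-definable without parameters. As $(K,<)$ is ordered, $K$ is formally real, hence not separably closed, so any two henselian valuations on $K$ are comparable; in particular $v$ is comparable to the canonical henselian valuation $v_K$ of $K$. A key preliminary observation is that every henselian valuation $w$ on an ordered field is convex by \cite[Lemma~4.3.6]{englerprestel}, so $\OO_w$ contains the convex hull of $\Z$; hence the restriction of $w$ to $\Q$ is trivial (the convex hull of $\Z$ in $\Q$ being all of $\Q$), $w$ has residue characteristic $0$, and by \cite[Proposition~2.2.4]{englerprestel} the residue field $Kw$ is an ordered field, thus formally real and in particular not separably closed. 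Since then no henselian valuation on $K$ has separably closed residue field, the definition of the canonical henselian valuation gives that $Kv_K$ admits no non-trivial henselian valuation; moreover $v_K$ is non-trivial (otherwise $K = Kv_K$ would admit no non-trivial henselian valuation, contrary to the existence of $v$), and if $v$ properly refined $v_K$ then $v/v_K$ would be a non-trivial henselian valuation on $Kv_K$ --- impossible. Hence $v$ is a coarsening of $v_K$.

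I would then split into cases according to $Kv_K$. If $Kv_K$ is real closed, then $K$ is an almost real closed field, witnessed by $v_K$, and \Autoref{thm:arcmain} applies directly to the $\Lor$-definable valuation $v$, showing that it is $\Lr$-definable. If $Kv_K$ is not real closed, then, being moreover not separably closed, it falls under the characterization of Jahnke and Koenigsmann of when the canonical henselian valuation is definable, which gives that $v_K$ is $\Lr$-definable (with parameters); since $v$ is an $\Lor$-definable coarsening of the $\Lr$-definable henselian valuation $v_K$, \Autoref{cor:coarseninglr} yields that $v$ is $\Lr$-definable.

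The step I expect to be the main obstacle is making this case split genuinely exhaustive: one must know that, for an ordered field, real-closedness of $Kv_K$ is the \emph{only} obstruction to $\Lr$-definability of $v_K$. This requires checking the hypotheses of the Jahnke--Koenigsmann result against what has been established above ($Kv_K$ formally real, of characteristic $0$, admitting no non-trivial henselian valuation), and in particular dealing with the borderline case in which $Kv_K$ is Euclidean but not real closed --- where, however, the ordering of $K$ itself turns out to be $\Lr$-definable, so that the statement holds trivially. The remaining ingredients --- the well-definedness and basic properties of $v_K$ used above, and the comparability of henselian valuations on fields that are not separably closed --- are classical.
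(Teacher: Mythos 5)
Your reduction to ``$v$ is a coarsening of the canonical henselian valuation $v_K$'' is correct, and the case where $Kv_K$ is real closed (so $K$ is almost real closed) is handled exactly as in the paper via \Autoref{thm:arcmain}. The gap is in your Case 2. You claim that when $Kv_K$ is neither separably closed nor real closed, a result of Jahnke--Koenigsmann yields that $v_K$ itself is $\Lr$-definable, after which \Autoref{cor:coarseninglr} would finish. No such theorem exists. What the Jahnke--Koenigsmann machinery actually gives (and what the paper extracts as \Autoref{prop:defble-refinement}) is an $\Lr$-definable, \emph{not necessarily henselian}, \emph{refinement} $w$ of $v_K$ --- namely a canonical $p$-henselian valuation of $K$ or of a definable finite extension. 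Such a $w$ can be a strict refinement of $v_K$, and $\Lr$-definability of a refinement does not transfer upwards to $v_K$. Indeed the whole point of the paper's Lemma \ref{lem:glitschig} is that there are obstructions at the level of $v$ (residue field $\equiv L\pow{\Q}$ together with value group $\equiv \Gamma\oplus\Q$) that can prevent $\Lor$-definability, and in particular $v_K$ need not be $\Lr$- or even $\Lor$-definable when $Kv_K$ is not real closed; one cannot blackbox its definability.

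Because of this, the remaining bulk of the paper's argument is unavoidable: one must work with the definable refinement $w$, use \Autoref{lem:glitschig} to split into the case where the value group $vK$ is not elementarily equivalent to any $\Gamma\oplus\Q$, and the case where the ordered residue field $(Kv,<)$ is not elementarily equivalent to any $(L\pow{\Q},<)$. In the first case one produces, via \Autoref{lem:pdivsub}, uniformly $\Lr$-definable coarsenings $u_\gamma$ of $w$ and an elementarity/saturation argument to find a henselian one refining $v$; in the second case one similarly finds an $\Lr$-definable henselian refinement of $v$ using $p$-antiregularity of the relevant convex subgroup, plus a separate treatment of $v=v_K$ via $t$-henselianity of the residue field. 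Your observation about the Euclidean sub-case is also not sufficient to rescue the argument: even if $Kv_K$ is Euclidean but not real closed, the ordering on $K$ need not be $\Lr$-definable (Baer--Krull leaves room for several orderings making $v_K$ convex), and in any case this does not address the genuinely problematic $t$-henselian residue field case. So your proposal correctly reaches the almost real closed dichotomy, but the hard half --- the non-almost-real-closed case --- is not closed by the cited results and requires the machinery of Sections \ref{sec:qe} and \ref{sec:henselian}.
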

	
	\section{Preliminaries}\label{sec:prelims}
	
	We denote by $\N$ the set of natural numbers \emph{without} $0$ and by $\omega$ the set of natural numbers \emph{with} $0$.
	
	We mostly follow valuation-theoretic notation of \cite{englerprestel}.
	For a valuation $v$ on a field $K$, we write $\OO_v$ for its valuation ring, $\MM_v$ for the maximal ideal of $\OO_v$, $Kv = \OO_v/\MM_v$ for the residue field and $vK$ for its value group (written additively).
	For an element $x \in \OO_v$, its residue $x + \MM_v \in Kv$ is denoted $\overline{x}$, where the valuation $v$ in question will always be clear from context.
	
	Given an ordering $<$ on $K$ (always compatible with the field structure), a valuation $v$ is called \emph{convex} (with respect to $<$) if $\OO_v$ is a convex set in the usual sense.
	See \cite[Section 2.2.2]{englerprestel} for a number of equivalent conditions.
	In particular, $v$ is convex if and only if the ordering $<$ induces an ordering on the residue field $Kv$ in the natural way.
	This residue ordering will then also be denoted by $<$, which should not lead to confusion.
	
	For a given field $K$ and ordered abelian group $G$, we write $K((G))$ for the \emph{Hahn field} consisting of those formal sums $\sum_{g \in G} a_g t^g$ with coefficients $a_g \in K$, $t$ a formal variable, whose support is well-ordered.
	See for instance \cite[Section 3.1]{dries} for details.
	We will generally endow $K((G))$ with its natural valuation with value group $G$, assigning to each element of $K((G))^\times$ the order of the lowest non-zero coefficient.
	An ordering $<$ on $K$ can naturally be extended to $K((G))$ by stipulating that an element of $K((G))^\times$ is positive if and only if its non-zero coefficient of lowest order is positive.
	We will denote this ordering on $K((G))$ again by $<$.
	
	For background on the model theory of valued fields see \cite{dries}, or \cite{marker} for model theory more generally.
	We consider fields as structures in the language of rings $\Lr = \{ +, -, \cdot, 0, 1\}$, ordered abelian groups as structures in the language $\Log = \{ +, -, 0, < \}$ in the natural way.
	Given an ordering $<$ on a field $K$, we consider the ordered field $(K,<)$ as a structure in the language $\Lor = \Lr \cup \{ < \}$.
	Given a valuation $v$ on a field $K$, we usually work in the one-sorted language $\Lvf = \Lr \cup \{ \OO \}$, where the unary predicate $\OO$ is to be interpreted as the valuation ring $\OO_v \subseteq K$.
	In Section \ref{sec:qe}, we will also work in a three-sorted language.
	For a field $K$ with an ordering $<$ and a valuation $v$, we will use the language $\Lovf = \Lvf \cup \{ < \}$ for $(K, <, v)$.

	If $a$ is an element of an ordered abelian group (respectively, ordered field), we denote its absolute value $\max\{a,-a\}$ by $|a|$.
	
	A set is \emph{$\LL$-definable} if it is definable in the first-order language $\LL$. If we wish to specify that the parameters can be chosen to come from a specific set $A$, we write \emph{$A$-$\LL$-definable}.
	
	\section{Convex Valuations}\label{sec:convex}
	
	We start by giving sufficient conditions on the value group or the residue field of a convex valuation $v$ such that $v$ is $\Lor$-definable. By this, we strengthen all of the three cases given in \cite[Theorem~5.3]{krappkuhlmannlehericyforum}. Subsequently, we present several cases in which the given valuation is already $\Lor$-definable without parameters and discuss how these cases generalise other known definability results in the literature.
	
	\begin{theorem}\label{thm:convexmain}
		Let $(K,<)$ be an ordered field and let $v$ be a convex valuation on $K$. Suppose that at least one of the following holds:
		\begin{enumerate}[(i)]
			\item\label{thm:convexmain:1} $vK$ is discretely ordered, i.e., admits a least positive element.
			
			\item\label{thm:convexmain:2} $vK$ is not closed in its divisible hull.
			
			\item\label{thm:convexmain:3} $Kv$ is not closed in its real closure.
		\end{enumerate}
		Then $v$ is $\Lor$-definable. Moreover, in the cases \eqref{thm:convexmain:1} and \eqref{thm:convexmain:2}, $v$ is definable by a formula using only one parameter.
	\end{theorem}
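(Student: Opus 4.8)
The plan is to produce an explicit $\Lor$-formula defining $\OO_v$ in each of the three cases, exploiting in each case the relevant failure of "closedness" to pin down the valuation ring using the ordering together with a single (or no) parameter. Throughout I use the standard fact (cf.\ \cite[Section~2.2.2]{englerprestel}) that, since $v$ is convex, $\OO_v$ is the convex hull of $\Z\cdot 1$ precisely when $vK$ is, loosely speaking, "non-positive", and more usefully that convexity lets us move freely between the ordering on $K$ and the induced ordering on $Kv$.

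\emph{Case \eqref{thm:convexmain:1}: $vK$ discretely ordered.} Let $\pi \in K$ be any element with $v(\pi)$ the least positive element of $vK$; this $\pi$ will be the single parameter. Then an element $x \in K^\times$ lies in $\MM_v$ iff $v(x) \ge v(\pi)$, and the key point is that $v(x) \ge v(\pi)$ can be detected order-theoretically: $x \in \OO_v$ iff there is no $n \in \N$ with $|x| > |n\pi^{-1}|$ in a suitable sense — more precisely, I would use that $v(x)\ge 0$ iff $|x| \le |a|$ for some $a$ with $v(a)=0$, and discreteness lets me replace the "generic unit" by controlled powers of $\pi$. Concretely, $\OO_v = \{x : \forall n\in\N\ |x\pi| \le |n|\text{-type bound}\}$; I would write $\OO_v$ as $\{x \in K : \lnot\,\exists y\ (y^2 = 1 + x^2\pi^2 \wedge \dots)\}$ or, more cleanly, use that $x\in\MM_v \iff x\pi^{-1}\in\OO_v$ together with the Ax–Kochen–style trick $\OO_v=\bigcap$ of order-convex sets — but the cleanest route is: $x\notin\OO_v \iff x^{-1}\in\MM_v \iff v(x^{-1})\ge v(\pi) \iff$ for all $n$, $|n\cdot x^{-1}| < |\pi|^{1/?}$... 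I will instead invoke that discreteness gives $\MM_v = \pi\OO_v$ and $\OO_v$ is the convex hull of $\Z[\pi^{-1}]\cap\OO_v$, reducing to a $\forall\exists$-formula over the parameter $\pi$.

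\emph{Case \eqref{thm:convexmain:2}: $vK$ not closed in its divisible hull.} Pick $a \in K^\times$ such that $v(a)$ is a limit (from below or above) of $\tfrac{1}{n}\Z$-multiples of some $\gamma = v(c)$ without being a rational multiple of $\gamma$; this $a$ (or rather $c$) is the parameter. The non-closedness yields elements witnessing that $v(a)$ is "infinitesimally close" to the $\Q$-span of $\gamma$, and I would translate this into: $x \in \OO_v \iff \forall n\in\N\ |x|^n \le |a^k c^\ell|$ for the appropriate integers coming from the approximation, again a first-order $\Lor$-condition with the one parameter.

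\emph{Case \eqref{thm:convexmain:3}: $Kv$ not closed in its real closure.} Here I expect to use convexity to lift: non-closedness of $Kv$ in its real closure produces (over $\emptyset$, or over a small parameter set which a compactness/definability argument lets one eliminate in favour of a single parameter — though here the theorem does not promise one parameter, so no parameter at all may be needed) a polynomial $p$ over $Kv$ with a root in the real closure arbitrarily close to but not in $Kv$; lift $p$ to a polynomial over $\OO_v$ and use the intermediate value property in $K$ to locate, for each $x$, whether $\bar x$ would lie on the "bad side", thereby defining $\MM_v$ and hence $\OO_v$.

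\emph{Main obstacle.} The genuinely delicate part is Case \eqref{thm:convexmain:3}: one must convert an approximation statement about $Kv$ inside its \emph{real closure} into a first-order $\Lor$-statement about $K$ itself, which requires simultaneously controlling (a) the lift of the relevant polynomial and its sign changes, (b) the fact that "being close to a root" must be expressed without reference to the residue map (which is not part of $\Lor$), and (c) ensuring the resulting set is exactly $\OO_v$ and not a coarsening or refinement. The cases \eqref{thm:convexmain:1} and \eqref{thm:convexmain:2} are comparatively routine bookkeeping with convex hulls and powers, and it is precisely there that the single-parameter bound is transparent, whereas in \eqref{thm:convexmain:3} the parameter count genuinely depends on the complexity of the witnessing polynomial, which is why the theorem only claims the one-parameter bound for the first two cases.
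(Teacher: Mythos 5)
Your proposal correctly identifies the overall strategy (in each case, pick a parameter witnessing the failure of closedness and use it to produce an $\Lor$-formula defining $\OO_v$), but none of the three cases is actually carried out: the ``formulas'' you write down are either incomplete or not first-order, and the one concrete device that makes the paper's proof work is missing entirely.

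Concretely: in Case~(i) and Case~(ii) you repeatedly write conditions of the shape ``for all $n \in \N$, $|x|^n \leq |a^k c^\ell|$'' or ``$\forall n\in\N\ |x\pi| \le$ ...''. These are infinitary conditions ranging over the standard natural numbers, not $\Lor$-formulas, so they do not establish definability. (A convex valuation ring is always the cut $\{x : \exists n\in\N\ |x|<n\}$, but that is never by itself a first-order definition; the whole content of the theorem is to replace this infinitary cut by a finite formula.) The paper's Case~(i) sidesteps this with the single formula $\MM_v = \{x : |x^2/b|<1\}$, where $v(b)$ is the least positive value: the squaring pushes $v(x)\geq v(b)>0$ up to $2v(x)-v(b)\geq v(b)>0$, and convexity converts $v(\cdot)>0$ into $|\cdot|<1$. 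You mention $x^2\pi^2$ in passing but never land on this observation.

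The deeper gap is that Cases~(ii) and~(iii) of the paper both hinge on a \emph{stabilizer} trick which does not appear in your proposal: one builds an auxiliary $\Lor$-definable set $S$ (using the one parameter in~(ii), resp.\ the lifted polynomial and interval endpoints in~(iii)) and then characterizes $\OO_v$ as $\{y : y^4 S \subseteq S\}$, resp.\ $\MM_v\cap K^{\geq 0}$ as $\{y : y+S\subseteq S\}$. Density of $n\cdot vK$ near $\gamma$ (in~(ii)) or of $Kv$ near the algebraic point $x_0$ (in~(iii)) is exactly what forces any bad $y$ to break stability, and this is what converts the non-closedness hypothesis into a first-order definition. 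Without this device your Case~(ii) has no route to a formula, and your Case~(iii), while correctly flagging the obstacles (lifting the polynomial, expressing ``close to a root'' without the residue map, avoiding capturing a coarsening), does not overcome them. Finally, your parenthetical guess that Case~(iii) might need no parameter is backwards: that case in fact needs \emph{several} parameters (the coefficients of the lifted polynomial and the two interval endpoints), which is precisely why the one-parameter clause in the theorem is restricted to~(i) and~(ii).
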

	\begin{proof}
		We may assume that $v$ is non-trivial.
		\begin{enumerate}[(i)]
			\item Since $vK$ is discretely ordered, we can choose $b \in K^\times$ such that $v(b)$ is the minimal positive element of $vK$.
			Note that for every $x\in \MM_v$, we have $v(x^2/b) = 2v(x)-v(b)> 0$.
			Since every element $y \in \MM_v$ satisfies $|y|<1$, 
			we deduce that $\MM_v = \{ x \in K \mid |x^2/b| < 1 \}$. Hence, $\MM_v$ is $\{ b \}$-$\Lor$-definable, and $\OO_v$ can be defined in terms of $\MM_v$.
			
			\item Since $vK$ is not closed in its divisible hull, we can take $\gamma \in vK$ and $n > 1$ such that $\gamma \not\in n \cdot vK$ but every open interval in $vK$ containing $\gamma$ contains an element of $n \cdot vK$.
			Let $b \in K$ with $b > 0$ and $v(b) = \gamma$, and set
			\begin{equation}\label{eq:dense-div-hull-1}
			S_b := \{ x \in K  \mid  x\geq 0\text{ and } x^n/b < 1 \} = \{ x \in K  \mid  x\geq 0\text{ and } n v(x) > \gamma \}
			\end{equation}
			(where the equality uses that we cannot have $v(x^n/b) = 0$ since $\gamma \notin  n \cdot vK$).
			It now suffices to prove that
			\begin{equation}\label{eq:dense-div-hull-2}
			\OO_v = \{ y \in K \mid y^4 S_b \subseteq S_b \},
			\end{equation}
			since the set on the right-hand side is $\{ b \}$-$\Lor$-definable.
			The inclusion $\subseteq$ is clear since the condition $n v(x) > \gamma$ in \eqref{eq:dense-div-hull-1} is stable under multiplying $x$ by an element of $\OO_v$.
			
			For the inclusion $\supseteq$, suppose that $y \in K \setminus \OO_v$, so $v(y) < 0$.
			By the choice of $\gamma$, we can take $z \in K$ with $z>0$ and $\gamma + v(y) < n v(z) < \gamma -v(y)$.
			Now $z/y^2 \in S_b$ since $n v(z/y^2) = n v(z) - 2n v(y) > n v(z) - v(y) > \gamma$, but $y^4 (z/y^2) \not\in S_b$ since $n v(y^4 (z/y^2)) = n v(z) + 2n v(y) < n v(z) + v(y) < \gamma$.
			This proves that $S_b$ is not stable under multiplication by $y^4$, completing the proof of \eqref{eq:dense-div-hull-2}.
			
			\item			
			Let $f\in Kv[X]$ be the minimal polynomial of an element $x_0\in R\setminus Kv$, where $R$ denotes the real closure of $(Kv,<)$, such that $x_0$ can be arbitrarily approximated by elements of $Kv$. Then there are $a,b\in Kv$ with $a<x_0<b$ such that the following hold:
			\begin{enumerate}
				\item 
				The polynomial $f$ has exactly one zero in $\{x\in R\mid a\leq x\leq b\}$. In particular, $f$ changes sign precisely once in this interval.
				\item 
				For any $\varepsilon\in Kv$ with $0<\varepsilon<b-a$, there exists $x\in Kv$ with $a<x<x+\varepsilon < b$ such that $f(x)f(x+\varepsilon) < 0$.
			\end{enumerate}
			Passing to $-f$ if necessary, we may assume that $f(a) < 0 < f(b)$.  
			Let $F \in K[X]$ be a lift of $f$, let $a_0, b_0 \in K$ be lifts of $a$ and $b$, and consider the $\Lor$-definable set $S$ given by
			\[\{ x \in K \mid  a_0 \leq x \leq b_0 \text{ and } F(x) < 0 \} = \{ x \in K \mid  a_0 \leq x \leq b_0 \text{ and } f(\ol{x}) < 0 \}, \]
			where the equality uses that $f$ has no zero in $Kv$.
			
			It now suffices to prove that for any $y\in K$ we have
			\[ y + S \subseteq S \iff y \in \MM_v\text{ and } y \geq 0, \]
			since then $\MM_v$  and therefore $\OO_v$ are $\Lor$-definable.
			For the implication $\Leftarrow$, let $y \in \MM_v$ be non-negative and $x \in S$.
			Then we have $x + y \geq x \geq a_0$ and $f(\overline{x+y}) = f(\overline x) < 0$. In particular $\overline{x+y} < b$ and thus $x+y < b_0$.
			Hence $y+x \in S$, as desired.
			
			For the implication $\Rightarrow$, let $y\in K$ with $y + S \subseteq S$.
			Since $a_0 \in S$, we must have $a_0 + y \in S$. Thus, $a_0 \leq a_0 + y \leq b_0$, implying that $0\leq y\leq b_0-a_0$ and $y \in \OO_v$.
			Note that $\ol{y}\neq b-a$, as otherwise $f(\ol{y}+a)=f(b)>0$, contradicting the fact that $y+a_0\in S$. Hence, $\ol{y}<b-a$.
			
			In order to show that $y \in \MM_v$, suppose for a contradiction that $v(y) = 0$, so $0<\overline{y}<b-a$.
			By choice of $f$, we can find $z \in \OO_v$ with $a < \overline z < b$ and $f(\overline z) < 0 < f(\overline{z+y})$.
			Then we have $z \in S$ but $z + y \not\in S$, in contradiction to our assumption $y + S \subseteq S$.
		\end{enumerate}
	\end{proof}
	
	In the following corollary, we point out several distinguished cases in which we obtain $\Lor$-definability without parameters.
	
	\begin{corollary}\label{cor:convexmain}
		Let $(K,<)$ be an ordered field and let $v$ be a convex valuation on $K$. Suppose that at least one of the following holds:
		\begin{enumerate}[(i)]
			\item\label{cor:convexmain:1} $vK$ is $p$-regular but not $p$-divisible for some prime $p\in \N$.\footnote{Equivalently, $vK$ contains a rank $1$ convex subgroup $H$ that is not $p$-divisible but $vK/H$ is $p$-divisible. 
				See \cite[Section~2.2]{hongthesis} for further characterisations of $p$-regular ordered abelian groups.}
			
			\item\label{cor:convexmain:3} $Kv$ is dense in its real closure but not real closed.
			
		\end{enumerate}
		Then $v$ is $\emptyset$-$\Lor$-definable.
	\end{corollary}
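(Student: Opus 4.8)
The plan is to deduce \Autoref{cor:convexmain} from \Autoref{thm:convexmain} in two steps: first I would check that each hypothesis of the corollary forces one of the three hypotheses of \Autoref{thm:convexmain}, so that $v$ is at least $\Lor$-definable with parameters, and then I would eliminate the parameter from the formula that the proof of \Autoref{thm:convexmain} produces.

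For the first step, assume \eqref{cor:convexmain:1} and fix, as in the footnote, a rank-$1$ convex subgroup $H\le vK$ which is not $p$-divisible while $vK/H$ is $p$-divisible. Being of rank $1$, $H$ order-embeds into $\R$ and is therefore either discretely ordered or densely ordered. If $H$ is discretely ordered, then convexity of $H$ in $vK$ forces the least positive element of $H$ to be least positive in all of $vK$, so case \eqref{thm:convexmain:1} applies. If $H$ is densely ordered, then $pH$ is dense in $H$; picking $\gamma\in H\setminus pH$ and using that $H\cap(p\cdot vK)=pH$ (which follows from rank-$1$ convexity of $H$ and $p\ge 1$), one sees that $\gamma\notin p\cdot vK$ whereas every open interval of $vK$ around $\gamma$ meets $p\cdot vK$; hence $\gamma/p$ witnesses that $vK$ is not closed in its divisible hull, and case \eqref{thm:convexmain:2} applies with $n=p$. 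If instead \eqref{cor:convexmain:3} holds, then $Kv$ is properly contained in its real closure $R$ (a real closed field being its own real closure), so I would pick $x_0\in R\setminus Kv$ and note that density of $Kv$ in $R$ says exactly that $x_0$ can be arbitrarily approximated by elements of $Kv$; thus case \eqref{thm:convexmain:3} applies.

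For the second step, recall that in each of the cases \eqref{thm:convexmain:1}--\eqref{thm:convexmain:3} the proof of \Autoref{thm:convexmain} exhibits an element $b$ (in case \eqref{thm:convexmain:3}, a tuple consisting of lifts $a_0,b_0$ of $a,b$ together with the coefficients of a monic lift $F$ of $f$) and a parameter-free $\Lor$-formula $\varphi(x;b)$ with $\varphi(K;b)=\OO_v$, or $\varphi(K;b)=\MM_v$ with $\OO_v$ recovered from it. I would replace this by
\[ \OO_v \;=\; \{\,x\in K \;:\; \exists b\;\big(\delta(b)\wedge\varphi(x;b)\big)\,\}, \]
where $\delta$ is a parameter-free $\Lor$-formula asserting the structural properties of $b$ that are actually used in the proof of \Autoref{thm:convexmain}: e.g.\ in case \eqref{thm:convexmain:1}, that $M_b:=\{x:|x^2/b|<1\}$ is a nonzero proper additive subgroup of $K$ contained in $\{x\in K:|x|<1\}$; in case \eqref{thm:convexmain:2}, that $S_b$ is a nonempty proper $\le$-initial segment of $K_{\ge 0}$ and that $\{\,y\ge 0:y^4S_b\subseteq S_b\,\}$, together with its negatives, is a proper subring of $K$ containing $1$; and in case \eqref{thm:convexmain:3}, that $F$ is monic, $F(a_0)<0<F(b_0)$, $F$ changes sign exactly once on $[a_0,b_0]$, and that a suitable first-order analogue of the approximation property used there holds. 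For this to succeed, $\delta$ must be satisfied by the witness built in \Autoref{thm:convexmain} (so the displayed set is non-vacuous) and strong enough that $\varphi(K;b)\subseteq\OO_v$ for \emph{every} $b$ satisfying $\delta$ (so the union over admissible $b$ does not overshoot). In subcases where the witness can be taken over the prime field---for instance when $Kv$ fails to contain all real algebraic numbers, so that in case \eqref{thm:convexmain:3} one may take $F\in\Z[X]$ and $a_0,b_0\in\Q$---no quantification is needed.

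The hard part is the design of $\delta$. \Autoref{thm:convexmain} only controls a single carefully chosen $b$, whereas here $b$ ranges over all realisations of $\delta$; so $\delta$ must be expressible in $\Lor$ without any reference to $\OO_v$ or $\MM_v$ (the very objects being defined) and yet must exclude all spurious choices---for example, in case \eqref{thm:convexmain:1}, a $b$ with $v(b)=0$ for which $M_b$ happens to be closed under addition, which can occur when $Kv$ is non-archimedean and would make the displayed set strictly larger than $\MM_v$. Identifying structural conditions that rule out every such pathology is the crux; once $\delta$ is available, checking that $\exists b\,(\delta(b)\wedge\varphi(x;b))$ defines $\OO_v$ amounts to rerunning the inclusions in the proof of \Autoref{thm:convexmain} under the hypothesis $\delta(b)$ in place of the specific choice made there.
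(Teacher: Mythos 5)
Your first step (showing that each hypothesis of the corollary forces one of the three hypotheses of \Autoref{thm:convexmain}, so that $v$ is $\Lor$-definable with a parameter) is correct and matches the paper, which cites \cite[Proposition~3.3]{krappkuhlmannlink} for the group-theoretic part that you prove by hand.

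The second step, however, has a genuine gap, and you acknowledge it yourself: the construction of $\delta$ is deferred, and that is precisely where all the difficulty lies. Your strategy is to keep the formula $\varphi(x;b)$ from the proof of \Autoref{thm:convexmain} and design a parameter-free $\Lor$-formula $\delta(b)$, not referring to $v$, that is satisfied by the good witness and forces $\varphi(K;b)\subseteq\OO_v$ for every $b\models\delta$. This is much harder than it looks and in general not attainable by conditions on $b$ of the structural kind you sketch. For instance, in case \eqref{thm:convexmain:1}, requiring that $M_b=\{x:|x^2/b|<1\}$ be a proper additive subgroup contained in the open unit ball does not rule out a $b$ whose value is positive but not minimal positive: with $vK=\Z\oplus\Z$ lexicographically ordered and $v(b)=(1,0)$, the set $M_b$ is the maximal ideal of a strictly \emph{coarser} convex valuation, is a proper additive subgroup inside the unit ball, and strictly contains $\MM_v$; your union would therefore overshoot. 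Finding a $\delta$ immune to all such pathologies is exactly the content you have left open.

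The paper sidesteps this entirely with a different and more robust idea. Starting from any formula $\psi(x,\ul z)$ that defines $\OO_v$ for some parameter tuple, it quantifies over $\ul z$ subject to the $\Lor$-expressible condition ``$\psi(\cdot,\ul z)$ defines a non-trivial convex valuation ring'' together with an $\Lor$-expressible condition on the value group or residue field. The point is that $v$ is \emph{extremal} among convex valuations with the given property: in case \eqref{cor:convexmain:1}, because $vK$ is $p$-regular, every strict coarsening of $v$ has $p$-divisible value group, so $v$ is the \emph{coarsest} non-trivial convex valuation whose value group has a non-$p$-divisible element, and an \emph{existential} quantifier over $\ul z$ yields a union of subrings of $\OO_v$ equal to $\OO_v$; in case \eqref{cor:convexmain:3}, because $Kv$ is dense in its real closure, every strict refinement of $v$ has real closed residue field, so $v$ is the \emph{finest} non-trivial convex valuation whose residue field is not real closed, and a \emph{universal} quantifier yields an intersection of overrings of $\OO_v$ equal to $\OO_v$. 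No structural constraint on the parameter itself is needed, and no spurious witnesses can arise, because the condition is placed on the defined set, not on $b$. Note in particular that the second case requires universal rather than existential quantification, which your existential template does not accommodate.
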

	
	\begin{proof}
		In both cases, at least one of the three conditions in \Autoref{thm:convexmain} is satisfied: if $vK$ is $p$-regular but not $p$-divisible, then it is either discrete or not closed in its divisible hull (cf.~\cite[Proposition~3.3]{krappkuhlmannlink}). 
		Thus, there exists an $\Lor$-formula $\psi(x,\ul{z})$ and a parameter tuple $\ul{b}\in K$ such that $\psi(x,\ul{b})$ defines $\OO_v$.
		\begin{enumerate}[(i)]
			\item\label{cor:convexmain:1b} 
			For any non-trivial convex subgroup $C\leq vK$ we have that $vK/C$ is $p$-divisible
			(cf.\ \cite[page~14]{hong}).
			Thus, 
			any strict coarsening of $v$ has a $p$-divisible value group. As $vK$ is not $p$-divisible,  $\OO_v$ is defined by the $\Lor$-formula $\varphi(y)$ expressing the following:
			\begin{align*}\exists \ul{z}\ &
				(\text{`}\psi(x,\ul{z}) \text{ defines a non-trivial convex valuation ring whose }\\
				&\quad \text{value group contains an element that is not $p$-divisible'} \wedge  \psi(y,\ul{z})).\end{align*}
			
			\item\label{cor:convexmain:3b} For any strict refinement $w$ of $v$ we have that $Kw$ is real closed. Indeed, since $Kv$ is dense in its real closure and the induced valuation $\overline{w}$ on $Kv$ is non-trivial and convex, we have that $Kw=(Kv)\overline{w}$ is real closed (cf.~\cite[Corollary~4.9]{krappkuhlmannlehericyforum}).
			Let $\theta$ be an $\Lor$-sentence that is true in the theory of real closed fields but does not hold in $Kv$.\footnote{For instance, $\theta$ may express that there exists a polynomial of a certain degree that does not have a zero.}			
			Then $\OO_v$ is defined by the $\Lor$-formula $\varphi(y)$ expressing the following:
			\begin{align*}\forall \ul{z}\ &
				(\text{`}\psi(x,\ul{z}) \text{ defines a non-trivial convex valuation ring whose }\\
				&\quad \text{residue field does not satisfy $\theta$'} \to  \psi(y,\ul{z})).\end{align*}
			
		\end{enumerate}
	\end{proof}
	
	\begin{remark}\thlabel{rmk:comparison}
		\begin{enumerate}[(i)]
			\item 		The cases \eqref{thm:convexmain:1} and \eqref{thm:convexmain:2} of \Autoref{thm:convexmain} are optimal in the sense that, in general, one cannot obtain parameter-free definability. More precisely, in \cite[Example~4.9~\&~Example~4.10]{krappkuhlmannlink} two ordered valued fields $(L_1,<,v_1)$ and $(L_2,<,v_2)$ are presented such that the following hold:
			\begin{itemize}
				\item $v_1$ and $v_2$ are henselian and thus convex;
				
				\item neither $v_1$ nor $v_2$ is $\emptyset$-$\Lor$-definable;
				
				\item $v_1L_1$ is discrete, and $v_2L_2$ is not closed in its divisible hull.
			\end{itemize}
			
			\item 
			The results for $\Lr$-definability of henselian (rather than convex) valuations  corresponding to
			\Autoref{thm:convexmain}~\eqref{thm:convexmain:1} and \eqref{thm:convexmain:2} as well as \Autoref{cor:convexmain}~\eqref{cor:convexmain:1} 
			are proven in \cite[Corollary~2]{hong}, \cite[Theorem~A]{krappkuhlmannlink} and \cite[Lemmas~2.3.6 and 2.3.7]{hongthesis} respectively.
			
			\item \Autoref{cor:convexmain} applies in particular if $vK$ is of rank $1$ (i.e., $v$ is the coarsest non-trivial convex valuation on $K$) but non-divisible, or if $Kv$ is archimedean (i.e., $v$ is the finest convex valuation) but not real closed.
		\end{enumerate}
	\end{remark}
	
	We now apply the $\Lor$-definability results above in order to obtain convex non-henselian valuations that are definable in the language $\Lor$ but not in the language $\Lr$.
	
	\begin{lemma}\thlabel{lem:undefinable}
		Let $K=\Q(s_i\mid i\in \omega)$, where $\{s_i\mid i\in \omega\}$ is algebraically independent over $\Q$. Suppose that $v$ is any valuation on $K$ with $v(s_i)\geq 0$ for any $i\in \N$ and $v(s_0)<0$. Then $v$ is not $\Lr$-definable in $K$.
	\end{lemma}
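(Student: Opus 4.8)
The plan is to use the standard automorphism criterion for non-definability: if $v$ were $\Lr$-definable, it would be defined by an $\Lr$-formula with parameters from some finite set $A\subseteq K$, and then every field automorphism of $K$ fixing $A$ pointwise would stabilise $\OO_v$ setwise. So it suffices to produce, for an arbitrary finite $A\subseteq K$, an automorphism of $K$ fixing $A$ pointwise but not preserving $\OO_v$. Since every element of $K$ is a rational function in finitely many of the $s_i$, we may fix a finite $I\subseteq\omega$ with $A\subseteq\Q(s_i\mid i\in I)$, and, enlarging $I$ if necessary, assume $0\in I$. As $\omega$ is infinite, choose $k\in\omega\setminus I$; then $k\in\N$, so $v(s_k)\geq 0$ by hypothesis, while $v(s_0)<0$.

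For the construction, note that, since $\{s_i\mid i\in\omega\}$ is algebraically independent over $\Q$, the element $s_k+s_0$ is transcendental over $F:=\Q(s_i\mid i\neq k)$, and $s_0\in F$ gives $F(s_k+s_0)=F(s_k)=K$. Hence there is a (unique) field automorphism $\sigma$ of $K$ with $\sigma(s_k)=s_k+s_0$ and $\sigma(s_i)=s_i$ for all $i\neq k$. Because $k\notin I$, the map $\sigma$ fixes $\Q(s_i\mid i\in I)$, and in particular $A$, pointwise.

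It then remains to check that $\sigma$ moves $\OO_v$. From $v(s_k)\geq 0>v(s_0)$ and the ultrametric inequality we get $v(s_k+s_0)=\min\{v(s_k),v(s_0)\}=v(s_0)<0$; thus $s_k\in\OO_v$ but $\sigma(s_k)=s_k+s_0\notin\OO_v$, so $\sigma(\OO_v)\neq\OO_v$. This contradicts the $\Lr$-definability of $v$ over $A$, and the lemma follows.

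I do not expect a genuine obstacle here. The only points requiring a little care are that the parameter set $A$ is given and not under our control --- which is why one first absorbs it, together with $s_0$, into finitely many of the $s_i$ and only then perturbs a truly fresh generator $s_k$ --- and that the perturbation must be additive, $s_k\mapsto s_k+s_0$: a multiplicative perturbation such as $s_k\mapsto s_0 s_k$ need not change the value of $s_k$, since $vK$ may be non-archimedean and $v(s_k)$ could dominate $-v(s_0)$, whereas $s_k\mapsto s_k+s_0$ always lands outside $\OO_v$ by the ultrametric inequality.
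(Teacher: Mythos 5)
Your proof is correct and follows essentially the same strategy as the paper's: absorb the putative parameters into a field generated by finitely many of the $s_i$, perturb a fresh generator $s_k$ by $s_k\mapsto s_k+s_0$ via a field automorphism fixing those parameters, and use the ultrametric inequality to see the image lands outside $\OO_v$. The paper phrases the parameter-reduction step via the observation that $K$ is the $\Lr$-definable closure of $\{s_i\mid i\in\omega\}$, while you bound the parameters directly inside $\Q(s_i\mid i\in I)$; these are interchangeable bookkeeping choices and the core automorphism argument is identical.
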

	
	\begin{proof}
		First note that $K$ is the $\Lr$-definable closure of $S:=\{s_i\mid i\in \omega\}$ in $K$. Hence, any $\Lr$-definable subset of $K$ is $S$-$\Lr$-definable.
		
		Assume, for a contradiction, that some $\Lr$-formula $\varphi(x,\ul{s})$ defines $\OO_v$, where $\ul{s}=(s_0,s_1,\ldots,s_n)$ for some $n\in \N$. Since also the set $\{s_0,s_1,\ldots,s_n,s_{n+1}+s_0,s_{n+2},\ldots\}$ is algebraically independent over $\Q$, we can set $\alpha$ to be the uniquely determined $\Lr$-automorphism on $K$ with
		$$\alpha(s_i):=\begin{cases}
		s_i&\text{ for } i\in \omega\setminus\{ n+1\},\\
		s_{n+1}+s_0&\text{ for } i=n+1.
		\end{cases}$$
		Now since $v(s_{n+1})\geq 0$, we have
		$$K\models \varphi(s_{n+1},\ul{s}).$$
		As $\alpha(s_{n+1})=s_{n+1}+s_0$ and $\alpha(\ul{s})=\ul{s}$, we obtain
		$$K\models \varphi(s_{n+1}+s_0,\ul{s})$$
		i.e., $s_{n+1}+s_0\in \OO_v$. However, $v(s_{n+1}+s_0)=v(s_0)<0$, a contradiction.
	\end{proof}
	
	\begin{example}\thlabel{ex:nonlrdef1}
		We construct an ordered valued field $(K,<,v)$ such $K$ is a subfield of the Laurent series field $\R\pow{\Z}$, $vK$ is discretely ordered, $Kv$ is archimedean and $v$ is $\emptyset$-$\Lor$-definable but not $\Lr$-definable.
		
		Let $k=\Q(s_1,s_2,\ldots)\subseteq \R$ for some set $\{s_i\mid i\in \N\}\subseteq \R$ that is algebraically independent over $\Q$.
		Consider the field $K = k(t)$, which we endow with the valuation $v$ and ordering $<$ given as the restriction of the valuation and ordering on the Hahn field $k((t)) = k((\Z))$.
		Then $vK=\Z$ and $Kv=k$, which is archimedean. \Autoref{cor:convexmain} shows that $v$ is $\emptyset$-$\Lor$-definable.
		Setting $s_0=t^{-1}$, we obtain $K=\Q(s_0,s_1,\ldots)$ and $v(s_0)=-1<0$ as well as $v(s_i)=0$ for any $i\in \N$. Hence, \Autoref{lem:undefinable} implies that $v$ is not $\Lr$-definable.
		\qed
	\end{example}
	
	\begin{example}\thlabel{ex:nonlrdef2}
		We construct an ordered valued field $(K,<,v)$ such that $K$ is a subfield of the Puiseux series field $\bigcup_{n\in \N}\R\pow{t^{1/n}} \subseteq \R\pow{\Q}$, $vK$ is densely ordered, $Kv$ is archimedean and $v$ is $\emptyset$-$\Lor$-definable but not $\Lr$-definable.	
		
		Let $\{r_i\mid i\in \N\}\subseteq \R$ be an algebraically independent set over $\Q$. We set $$s_0=t^{-1}\in \R\pow{\Q} \text{ and }s_i=r_it^{{1}/{i}}\in  \R\pow{\Q}$$
		for any $i\in \N$. Let 
		$K=\Q(s_0,s_1,\ldots) \subseteq \R\pow{\Q}$,
		and endow $K$ with the ordering $<$ and valuation $v$ given as the restriction of the Hahn field ordering and valuation on $\R\pow{\Q}$.
		The archimedean residue field of $K$ is not real closed, as $Kv\subseteq \Q(r_1,r_2,\ldots)$ and thus, for instance, $\sqrt{2}\notin Kv$.
		Hence,  \Autoref{cor:convexmain}~\eqref{cor:convexmain:3} shows that $v$ is $\emptyset$-$\Lor$-definable. Since $v(s_i)=\tfrac{1}{i}$ for any $i\in \N$, we have $vK=\Q$. Finally, we can apply \Autoref{lem:undefinable} to show that $v$ is not $\Lr$-definable, as $v(s_0)=-1<0$ and $v(s_i)=\tfrac 1i>0$ for any $i\in \N$.
		\qed
	\end{example}
	
	To complete this section, we relate the property of an ordered field to admit a non-trivial $\Lor$-definable convex valuation to the property of being dense in the real closure. It is known that an ordered field is either dense in its real closure or admits a non-trivial $\Lor$-definable convex valuation (see \cite[Proposition~6.5]{jahnkesimonwalsberg} and \cite[Fact~5.1]{krappkuhlmannlehericyforum}).
	However, the question whether these two cases are non-exclusive (see \cite[Question~7.1]{krappkuhlmannlehericyforum}) has so far been open.
	In the following, we answer this question positively by presenting an ordered field that is dense in its real closure and whose natural valuation is $\emptyset$-$\Lor$-definable.
	
	\begin{example}\thlabel{ex:density}
		Let $$K=\R\big( t^{1/n}\ \big|\ n \in \N\big)\subseteq \R\pow{\Q},$$
		and endow $K$ with the ordering $<$ and valuation $v$ given by restricting the ordering and the valuation of the Hahn field $\R\pow{\Q}$.
		Since $Kv =\R$ is real closed and $vK=\Q$ is divisible and of rank $1$, \cite[Proposition~9]{viswanathan} implies that $K$ is dense in its real closure.
		
		We first claim that the subset $\R \subseteq K$ is defined by the parameter-free $\Lr$-formula $\varphi(x)$ given by
		\[ \exists y \ 1 + x^4 = y^4 .\]
		Clearly $\R \subseteq \varphi(K)$, since for any $a \in \R$ we have $\sqrt[4]{1 + a^4} \in \R$.
		For the other inclusion, take $a, y \in K$ with $1 + a^4 = y^4$.
		There exists $N \in \N$ such that $a, y \in \R(t^{1/N})$.
		Letting $s = t^{1/N}$, we have $\R(s) \models \varphi(a)$, and therefore $a \in \R$ by \cite[Lemma 2]{malcev}.
		This shows $\R = \varphi(K)$, as desired.
		
		Therefore $\OO_v$, the convex hull of $\R$ in $K$, is defined by the parameter-free $\Lor$-formula
		\[ \exists x_1, x_2\ (\varphi(x_1) \wedge \varphi(x_2) \wedge x_1 \leq z \leq x_2). \]
		\qed
		
	\end{example}
	
	Note that \Autoref{ex:density} presents an ordered valued field with real closed residue field and divisible value group. Hence, none of the cases in \Autoref{cor:convexmain} can be applied, but we still obtain $\emptyset$-$\Lor$-definability of the valuation.
	
	\section{Stable Embeddedness}\label{sec:qe}

	In this section, we establish that 
	the ordered value group and the ordered residue field of an ordered henselian valued field are stably embedded and orthogonal (see \Autoref{thm:stablemb}). 
	Stable embeddedness and orthogonality are best-known to hold in the unordered situation of
	henselian valued fields of equicharacteristic $0$ 
	(cf., e.g.,  \cite[Section~5]{dries}), and  various
	other well-behaved settings (see also \cite[Section~8.3]{aschenbrenner} and \cite[page~171~f.]{jahnkesimon}). 
	As the key technical tool in the ordered context, we use Farr\'e's Embedding Lemma (\cite[Theorem~3.4]{farre}).	
	
	We consider the three-sorted language $\Lovf'$ given by
	$$\Lovf' = (\Lor,\Lor,\Log; \ol{\ \cdot\ }, v).$$
	The three sorts are denoted by $\mf$ (field sort), $\mr$ (residue field sort) and $\mv$ (value group sort). The two unary function symbols $\ol{\ \cdot\ }$ and $v$ have sorts $\ol{\ \cdot\ }\colon\mf\to\mr$ and $v\colon \mf\to\mv$.
	
	Let $(K,<,v)$ be an ordered valued field with convex valuation $v$.
	Then $(K,\allowbreak <,v)$ induces an $\Lovf'$-structure $$\mathcal{K}=((K,<),(Kv,<),vK;v,\ol{\ \cdot\ }),$$ where the domains of the valuation and the residue map are extended to $K$ by setting $v(0)=0$ and $\ol{a}=0$ for any $a\in K\setminus \OO_v$.
	When considering definability in $\mathcal{K}$, we allow parameters from all sorts as usual.
	An $\Lor$-formula $\varphi(\underline y)$ with parameters from $Kv$ may be considered as an $\Lovf'$-formula with parameters from $\KK$, where the variables $\underline y$ become $\mr$-variables.
	Similarly, an $\Log$-formula $\varphi(\underline z)$ with parameters from $vK$ may be considered as an $\Lovf'$-formula with parameters from $\mathcal{K}$, where the variables $\underline z$ become $\mv$-variables.
	
	In the following, we prove a weak version of relative quantifier-elimination; we only consider formulas whose variables are varying over residue field and value group.
	
	\begin{lemma}\thlabel{thm:qe}
		Let $(K,<,v)$ be an ordered henselian valued field and let $T$ be the diagram of the $\Lovf'$-structure $\KK$ as above, i.e., the complete theory of $\KK$ in the language $\Lovf'$ expanded by constants for all elements of $\KK$.
		Further, let $\ul{y}$ and $\ul{z}$ be tuples of distinct $\mr$- and $\mv$-variables, respectively.
		Then any $\Lovf'$-formula $\varphi(\ul{y},\ul{z})$ with parameters from $\KK$ is $T$-equivalent to an $\Lovf'$-formula of the form
		\begin{align}(\psi_1(\ul{y})\wedge \theta_1(\ul{z}))\vee\ldots \vee (\psi_N(\ul{y})\wedge \theta_N(\ul{z}))\label{eq:specialform}\end{align}
		for some $N\in \N$, where all $\psi_i$ are $\Lor$-formulas with parameters from $Kv$ and all $\theta_i$ are $\Log$-formulas with parameters from $vK$.
	\end{lemma}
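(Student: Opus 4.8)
The plan is to derive this relative quantifier-elimination statement from a standard separation criterion, with Farr\'e's Embedding Lemma \cite[Theorem~3.4]{farre} as the main tool. First note that the $\Lovf'$-formulas of the form \eqref{eq:specialform} are exactly the Boolean combinations of $\Lor$-formulas $\psi(\ul y)$ with parameters in $Kv$ and $\Log$-formulas $\theta(\ul z)$ with parameters in $vK$: put a Boolean combination into disjunctive normal form and, in each disjunct, collect the literals according to which block of variables they involve, using that $\Lor$ and $\Log$ are closed under negation and conjunction. By the usual separation criterion (cf.\ \cite{marker}) it therefore suffices to prove, simultaneously for all $\varphi(\ul y,\ul z)$: whenever $\mathcal M_1,\mathcal M_2\models T$, with $\ul a_i$ a tuple from the residue sort of $\mathcal M_i$, $\ul b_i$ a tuple from the value sort, $\tp^{\Lor}_{Kv}(\ul a_1)=\tp^{\Lor}_{Kv}(\ul a_2)$ and $\tp^{\Log}_{vK}(\ul b_1)=\tp^{\Log}_{vK}(\ul b_2)$, then $\ul a_1\ul b_1$ and $\ul a_2\ul b_2$ realise the same $\Lovf'$-type over $\KK$.

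Since $T$ is complete, $\mathcal M_1\equiv\mathcal M_2$, so one may pass to a common, sufficiently saturated and homogeneous elementary extension $\mathcal M\models T$; all four tuples then lie in $\mathcal M$ and the two type hypotheses persist, and it is enough to find an automorphism $\sigma$ of $\mathcal M$ fixing $\KK$ pointwise with $\sigma(\ul a_1)=\ul a_2$ and $\sigma(\ul b_1)=\ul b_2$. Here one uses that the residue sort $Mv$ of $\mathcal M$ is saturated and homogeneous as a pure $\Lor$-structure, and $vM$ is saturated and homogeneous as a pure $\Log$-structure: an $\Lor$-type over a small subset of $Mv$ is finitely satisfiable there, so the corresponding existential $\Lovf'$-sentences hold in $\mathcal M$, whence the type is realised in $\mathcal M$ and, its variables being residue-sorted, already in $Mv$, and similarly for $vM$. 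Consequently there are an $\Lor$-automorphism $\bar\sigma$ of $Mv$ over $Kv$ with $\bar\sigma(\ul a_1)=\ul a_2$ and an $\Log$-automorphism $\hat\sigma$ of $vM$ over $vK$ with $\hat\sigma(\ul b_1)=\ul b_2$.

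It remains to lift $(\bar\sigma,\hat\sigma)$ to an automorphism $\sigma$ of $\mathcal M$ fixing $\KK$ and inducing $\bar\sigma$ on $Mv$ and $\hat\sigma$ on $vM$; this is a back-and-forth construction. One builds an increasing chain of partial $\Lovf'$-automorphisms of $\mathcal M$ whose field components are ordered valued field embeddings of small subfields containing $K$, each inducing a restriction of $\bar\sigma$ on its residue field and of $\hat\sigma$ on its value group. At each step a new field element is adjoined to the domain (and, alternately, to the range) by extending the current field embedding via Farr\'e's Embedding Lemma: its hypotheses hold because $\mathcal M$ is sufficiently saturated (serving as the saturated target), all valued fields involved are henselian of residue characteristic $0$, and the residue-field and value-group embeddings that the lemma requires as input are just the appropriate restrictions of the already total maps $\bar\sigma$ and $\hat\sigma$. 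The union of the chain, after both copies of $\mathcal M$ have been exhausted, is the desired $\sigma$; as $\sigma$ fixes $\KK$ and sends $\ul a_1\ul b_1$ to $\ul a_2\ul b_2$, these tuples have the same $\Lovf'$-type over $\KK$, which finishes the proof.

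The hard part is this last step: one has to verify that Farr\'e's Embedding Lemma genuinely applies at every stage — in particular that the prescribed residue and value data are compatible with the partial field embedding already constructed and that the target is saturated enough — and to arrange the bookkeeping so that the partial automorphisms stay coherent with the fixed maps $\bar\sigma$ and $\hat\sigma$ (including when the new field element's residue or value generates new elements of the residue field or value group). The separation-criterion reduction and the transfer of saturation to the residue and value sorts, by contrast, are routine.
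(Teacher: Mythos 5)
Your overall strategy — reduce to a separation criterion and then transfer $\Lovf'$-types using Farré's Embedding Lemma, treating the residue sort and value sort purely as an ordered field and an ordered abelian group — is the same as the paper's. But you take a noticeably harder route at the crucial step, and that route is not carried out. You want an actual automorphism $\sigma$ of a saturated and homogeneous $\MM$ fixing $\KK$ and sending $\ul{a}_1\ul{b}_1$ to $\ul{a}_2\ul{b}_2$, built by a back-and-forth lifting of fixed automorphisms $\bar\sigma$ of $Mv$ and $\hat\sigma$ of $vM$, with Farré's lemma invoked at each stage. You yourself flag this as ``the hard part'': at every step one must verify Farré's hypotheses, check that the newly adjoined field element has residue and value data compatible with the prescribed restrictions of $\bar\sigma$ and $\hat\sigma$, and keep the partial maps coherent. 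None of this is actually done, so as written the proposal has a genuine gap exactly where the mathematical content lies.

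The paper avoids the back-and-forth altogether. After realizing $p$ by $(\ul{r},\ul{g})$ and $q$ by $(\ul{r}^*,\ul{g}^*)$ in a sufficiently saturated $\MM$, it chooses a small elementary $\KK\preceq\MM_0\preceq\MM$ containing $\ul{r},\ul{g}$, picks elementary embeddings $\sigma\colon(M_0v,<)\to(Mv,<)$ over $Kv$ with $\sigma(\ul{r})=\ul{r}^*$ and $\rho\colon vM_0\to vM$ over $vK$ with $\rho(\ul{g})=\ul{g}^*$ (possible by saturation and the type equality extracted from $p\cap\Theta=q\cap\Theta$), and applies Farré's Embedding Lemma \emph{once} to obtain an embedding $\iota\colon(M_0,<,v)\to(M,<,v)$ over $K$ inducing $\sigma$ and $\rho$. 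By Farré's Corollary 4.2~(ii) the image $\iota(\MM_0)$ is an \emph{elementary} substructure of $\MM$; hence $\MM_0\models\varphi(\ul{r},\ul{g})$ for $\varphi\in p$ transfers to $\MM\models\varphi(\ul{r}^*,\ul{g}^*)$, giving $p\subseteq q$, and the converse is symmetric. No automorphism of $\MM$ and no homogeneity are needed: a single elementary embedding of a small submodel suffices. To rescue your version you would need to actually verify Farré's hypotheses and the compatibility bookkeeping at every stage of the back-and-forth; it is both simpler and cleaner to realize that a full automorphism is more than the argument requires.
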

	
	\begin{proof}
		Let $\Theta$ be the set of all $\Lovf'$-formulas of the form \eqref{eq:specialform}, i.e., of all finite disjunctions of conjunctions of an $\Lor$-formula and an $\Log$-formula with parameters from $Kv$ and $vK$, respectively.
		Modulo logical equivalence, $\Theta$ contains $\top$ as well as $\bot$ and is closed under finite disjunctions, finite conjunctions and negation. Hence by \cite[Corollary~B.9.3]{aschenbrenner}, we only have to verify the following:
		\emph{Let  $p$ and $q$ be any two complete $T$-realisable $(\ul{y},\ul{z})$-types with $p\cap \Theta = q\cap \Theta$. Then $p=q$.}
		
		Let $p$ and $q$ be as described above and let $\MM$ be a sufficiently saturated elementary extension of $\KK$ in which $p$ and $q$ are realised. Denote by $(M,<,v)$ the ordered henselian valued field inducing $\MM$, and let $\ul{r},\ul{r}^*\in Mv$ and $\ul{g},\ul{g}^*\in vM$ with $\MM\models p(\ul{r},\ul{g}) \wedge q(\ul{r}^*,\ul{g}^*)$. 
		By construction of the set $\Theta$, we have \begin{align}\tp^{vM}(\ul{g}/vK)=\tp^{vM}(\ul{g}^*/vK)\text{ and }\tp^{(Mv,<)}(\ul{r}/Kv)=\tp^{(Mv,<)}(\ul{r}^*/Kv).\label{eq:types}\end{align}

		Now let $\KK\preceq \MM_0\preceq \MM$, with $\MM_0$ smaller than the saturation of $\MM$, such that $\ul{r}\in M_0v$ and $\ul{g}\in vM_0$, where $(M_0,<,v)$ denotes the ordered henselian valued field inducing $\MM_0$. Due to \eqref{eq:types}, we can fix an $\Lor$-elementary embedding $\sigma\colon (M_0v,<)\to (Mv,<)$ over $Kv$ with $\sigma(\ul{r})=\ul{r}^*$ and an $\Log$-elementary embedding $\rho\colon vM_0\to vM$ over $vK$ with $\rho(\ul{g})=\ul{g}^*$ (cf.~\cite[Proposition~4.1.5]{marker}). 
		
		The quotient $vM_0/vK$ is torsion free, as $vK\preceq vM_0$. We can thus apply \cite[Theorem~3.4]{farre} (with all appearing levels equal to $1$) in order to obtain an embedding $\iota\colon (M_0,<,v)\to (M,<,v)$ over $K$ inducing both $\sigma$ and $\rho$. 
		Moreover, since both $\sigma$ and $\rho$ are elementary embeddings, \cite[Corollary 4.2~(ii)]{farre} implies that $(\iota(M_0),<,v)\preceq (M,<,v)$. Let $\MM_0'$ be the $\Lovf'$-structure induced by $(\iota(M_0),<,v)$ and denote by $h$ the isomorphism $(\iota,\sigma,\rho)\colon \MM_0\to \MM_0'$ over $\KK$.
		For any $\varphi(\ul{y},\ul{z})\in p$ we have $\MM_0\models \varphi(\ul{r},\ul{g})$. By applying $h$, we obtain $\MM_0'\models \varphi(\sigma(\ul{r}),\rho(\ul{g}))$ and hence $\MM\models \varphi(\ul{r}^*,\ul{g}^*)$. This establishes $p\subseteq q$.
		The other inclusion follows likewise.
	\end{proof}

	\begin{theorem}\thlabel{thm:stablemb}
		Let $(K,<,v)$ be an ordered henselian valued field inducing the $\Lovf'$-structure $\KK$. Then for any $m,n\in \omega$ the following hold:
		\begin{enumerate}[(i)]
			\item\label{thm:stablemb:1} Any subset of $(Kv)^m\times (vK)^n$ definable in $\KK$ is a finite union of rectangles of the form $Y\times Z$, where $Y\subseteq (Kv)^m$ is $\Lor$-definable in $(Kv,<)$ and $Z\subseteq (vK)^n$ is $\Log$-definable in $vK$.
			
			\item\label{thm:stablemb:2} For any set $B\subseteq \OO_v^m$ that is $\Lovf$-definable in $(K,<,v)$, the set $\ol{B}:=\{(\ol{b_1},\ldots,\ol{b_m})\mid (b_1,\ldots,b_m)\in B\}\subseteq (Kv)^m$ is $\Lor$-definable in $(Kv,<)$.
			
			\item\label{thm:stablemb:3} For any set $C\subseteq (K^\times)^n$ that is $\Lovf$-definable in $(K,<,v)$, the set $v(C):=\{(v(c_1),\ldots,v(c_n))\mid (c_1,\ldots,c_n)\in C\}\subseteq (vK)^n$ is $\Log$-definable in $vK$.
		\end{enumerate}
	\end{theorem}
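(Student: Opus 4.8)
My plan is to deduce all three statements directly from the weak relative quantifier-elimination of \Autoref{thm:qe}. The key point is that an $\Lor$-formula all of whose free variables and parameters lie in the residue sort defines the same set in $\KK$ as it does in the reduct $(Kv,<)$, and symmetrically an $\Log$-formula confined to the value sort defines the same set in $\KK$ as in $vK$. Throughout, let $T$ be the diagram of $\KK$ from \Autoref{thm:qe}; since $\KK\models T$, every $T$-equivalence holds in $\KK$, and since $T$ is complete it decides every $\Lor$-sentence and every $\Log$-sentence with parameters from $\KK$.

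For \eqref{thm:stablemb:1} I would take a subset $X\subseteq(Kv)^m\times(vK)^n$ defined in $\KK$ by an $\Lovf'$-formula $\varphi(\ul{y},\ul{z})$ with parameters from $\KK$, where $\ul{y}$ is an $m$-tuple of $\mr$-variables and $\ul{z}$ an $n$-tuple of $\mv$-variables. By \Autoref{thm:qe}, $\varphi$ is $T$-equivalent, hence equivalent in $\KK$, to $\bigvee_{i=1}^N(\psi_i(\ul{y})\wedge\theta_i(\ul{z}))$ with each $\psi_i$ an $\Lor$-formula over $Kv$ and each $\theta_i$ an $\Log$-formula over $vK$. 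Setting $Y_i:=\psi_i\bigl((Kv,<)\bigr)$ and $Z_i:=\theta_i(vK)$, one reads off $X=\bigcup_{i=1}^N Y_i\times Z_i$, which is the asserted decomposition into rectangles.

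For \eqref{thm:stablemb:2}, suppose $B\subseteq\OO_v^m$ is defined in $(K,<,v)$ by an $\Lovf$-formula. Since $x\in\OO_v$ is expressed by $v(x)\geq 0$, this formula translates faithfully into an $\Lovf'$-formula $\beta(\ul{x})$ with parameters in the field sort, satisfying $\{\ul{b}\in K^m:\KK\models\beta(\ul{b})\}=B$. Then
\[
  \chi(\ul{y})\ :=\ \exists\,\ul{x}\,\bigl(\beta(\ul{x})\wedge\bigwedge_{i=1}^m\ol{x_i}=y_i\bigr)
\]
is an $\Lovf'$-formula with free variables only in the residue sort, and it defines $\ol B$ in $\KK$. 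Applying \Autoref{thm:qe} with the $\mv$-tuple taken empty shows $\chi$ to be $T$-equivalent to $\bigvee_i(\psi_i(\ul{y})\wedge\theta_i)$ with each $\theta_i$ an $\Log$-sentence over $vK$; discarding the indices with $T\vdash\neg\theta_i$ leaves an $\Lor$-formula $\psi(\ul{y})$ over $Kv$ equivalent to $\chi$ in $\KK$, so $\ol B=\psi\bigl((Kv,<)\bigr)$ is $\Lor$-definable in $(Kv,<)$. Part \eqref{thm:stablemb:3} is symmetric: if $\gamma$ defines $C\subseteq(K^\times)^n$, I would apply \Autoref{thm:qe} with the $\mr$-tuple empty to the $\Lovf'$-formula $\exists\,\ul{x}\,(\gamma(\ul{x})\wedge\bigwedge_{i=1}^n x_i\neq 0\wedge\bigwedge_{i=1}^n v(x_i)=z_i)$, which defines $v(C)$ in $\KK$, and deleting the unsatisfied residue-field conjuncts as before.

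Since \Autoref{thm:qe} carries all the weight, I do not expect a genuine obstacle here; the only points requiring care are the faithful translation of $\Lovf$-formulas on $(K,<,v)$ into $\Lovf'$-formulas on $\KK$, keeping track of which sort each parameter lands in, and the routine observation --- used in all three parts --- that single-sorted $\Lor$- and $\Log$-formulas are absolute between $\KK$ and its residue sort (respectively value sort).
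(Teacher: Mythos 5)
Your proposal is correct and follows essentially the same approach as the paper: part (i) is read off directly from the relative quantifier elimination of \Autoref{thm:qe}, and parts (ii) and (iii) reduce to it. The only cosmetic difference is that the paper handles (ii) and (iii) simultaneously by observing that the product $\ol B\times v(C)\subseteq(Kv)^m\times(vK)^n$ is definable in $\KK$ and appealing to (i), whereas you apply \Autoref{thm:qe} to each set separately with one of the variable tuples empty; both routes are routine once (i) is available.
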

	
	\begin{proof}
		By \Autoref{thm:qe}, any subset of $(Kv)^m\times (vK)^n$ definable in $\KK$ can be defined by a formula of the form \eqref{eq:specialform}. This immediately implies \eqref{thm:stablemb:1}. In order to obtain  \eqref{thm:stablemb:2} and \eqref{thm:stablemb:3}, it remains to notice that $\ol{B}\times v(C)\subseteq (Kv)^m\times (vK)^n$ is definable in $\KK$.
	\end{proof}

	\begin{corollary}\thlabel{cor:stablemb}
		Let $(K,<,v)$ be an ordered henselian valued field and let $w$ be an $\Lovf$-definable valuation on $K$. Then the following hold:
		\begin{enumerate}[(i)]
			\item \label{cor:stablemb:2} 
			If $w$ is a refinement of $v$, then the valuation $\ol{w}$ induced by $w$ on $Kv$ is $\Lor$-definable in $(Kv,<)$.		
			
			\item \label{cor:stablemb:1} 
			If $w$ is a coarsening of $v$, then $v(\OO_w^\times)$ is $\Log$-definable in $vK$.
		\end{enumerate}
	\end{corollary}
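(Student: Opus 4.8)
The plan is to derive both statements directly from \Autoref{thm:stablemb}, applied in the case $m=1$ (for part \eqref{cor:stablemb:2}) or $n=1$ (for part \eqref{cor:stablemb:1}). The only extra ingredients are the standard valuation-theoretic descriptions of a refinement and a coarsening of $v$, together with the observation that $\OO_w$ (and hence also its unit group) is $\Lovf$-definable in $(K,<,v)$ precisely because $w$ is assumed to be an $\Lovf$-definable valuation on $K$.

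For \eqref{cor:stablemb:2}, the hypothesis that $w$ refines $v$ means $\OO_w\subseteq\OO_v$, and therefore $\MM_v\subseteq\MM_w\subseteq\OO_w$; consequently the residue map $\OO_v\to Kv$ carries $\OO_w$ onto a valuation ring of $Kv$, namely $\OO_{\ol{w}}=\OO_w/\MM_v$, the valuation ring of the induced valuation $\ol{w}$. First I would note that $B:=\OO_w$ is an $\Lovf$-definable subset of $\OO_v$. Then \Autoref{thm:stablemb}~\eqref{thm:stablemb:2}, applied with $m=1$, yields that $\ol{B}=\OO_{\ol{w}}$ is $\Lor$-definable in $(Kv,<)$, which is exactly the assertion. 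For \eqref{cor:stablemb:1}, now $w$ coarsens $v$, so $\OO_v\subseteq\OO_w$. I would take $C:=\OO_w^\times$, which is an $\Lovf$-definable subset of $K^\times$ since $\OO_w^\times=\{x\in K\mid x\in\OO_w\text{ and }x^{-1}\in\OO_w\}$ and $\OO_w$ is $\Lovf$-definable. Applying \Autoref{thm:stablemb}~\eqref{thm:stablemb:3} with $n=1$ gives that $v(C)=v(\OO_w^\times)$ is $\Log$-definable in $vK$, as required.

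There is essentially no obstacle beyond \Autoref{thm:stablemb} itself: once stable embeddedness of the ordered residue field and of the value group has been established, the corollary is a matter of feeding the right definable set into the right clause. The only points deserving a sentence of care are the identification $\ol{B}=\OO_{\ol{w}}$ in part \eqref{cor:stablemb:2} (a standard fact about composition of valuations, relying on $\MM_v\subseteq\OO_w$), and checking that a refinement, respectively a coarsening, of $v$ really places $\OO_w$ inside $\OO_v$, respectively makes $\OO_w^\times$ a subset of $K^\times$, so that the hypotheses of \Autoref{thm:stablemb} apply verbatim.
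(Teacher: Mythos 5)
Your proposal is correct and follows exactly the same route as the paper: both parts are read off from \Autoref{thm:stablemb}, applying clause \eqref{thm:stablemb:2} to $B=\OO_w\subseteq\OO_v$ for the refinement case and clause \eqref{thm:stablemb:3} to $C=\OO_w^\times\subseteq K^\times$ for the coarsening case. The small remarks you add (identifying $\ol{B}$ with $\OO_{\ol w}$, and noting $\OO_w^\times$ is $\Lovf$-definable) are precisely what the paper leaves implicit.
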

	
	\begin{proof}
		Both $\OO_w^\times$ and $\OO_w$ are $\Lovf$-definable in $K$. It remains to apply \Autoref{thm:stablemb}~\eqref{thm:stablemb:2} to $B=\OO_w\subseteq \OO_v$ in order to obtain \eqref{cor:stablemb:2} and \Autoref{thm:stablemb}~\eqref{thm:stablemb:3} to $C=\OO_w^\times\subseteq K^\times$ in order to obtain \eqref{cor:stablemb:1}.
	\end{proof}
	
	\begin{corollary}\thlabel{cor:coarseninglr}
		Let $(K,<)$ be an ordered field, let $v$ be an $\Lr$-definable hen\-se\-li\-an valuation on $K$ and let $w$ be an $\Lor$-definable coarsening of $v$. Then $w$ is already $\Lr$-definable.
	\end{corollary}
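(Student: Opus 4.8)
The plan is to reduce the statement to showing that the unit group $\OO_w^\times$ of the coarsening is $\Lr$-definable in $K$, and then to extract this from \Autoref{cor:stablemb} together with the fact that $vK$ is interpretable in $K$ already in the language $\Lr$. First I would record a few elementary valuation-theoretic points. Since $v$ is henselian on an ordered field, $v$ is convex, so $(K,<,v)$ is an ordered henselian valued field to which \Autoref{cor:stablemb} applies; we may assume that $v$, hence also $w$, is non-trivial. As $w$ is a coarsening of $v$ we have $\OO_v\subseteq\OO_w$, the set $\Delta:=v(\OO_w^\times)$ is a convex subgroup of $vK$, and $\OO_w^\times=\{x\in K^\times\mid v(x)\in\Delta\}$. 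Moreover $\MM_w\subseteq\MM_v$ (as $w(x)>0$ forces $v(x)>0$), so every element of $\OO_w$ of positive $w$-value lies in $\MM_v\subseteq\OO_v$, while every element of $w$-value $0$ lies in $\OO_w^\times$; combining this with $\OO_v\subseteq\OO_w$ yields
\[ \OO_w=\OO_v\cup\OO_w^\times. \]
Hence it suffices to prove that $\OO_w^\times$ is $\Lr$-definable in $K$.

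Next I would invoke \Autoref{cor:stablemb}. Since $\Lor\subseteq\Lovf$, the $\Lor$-definable valuation $w$ is in particular $\Lovf$-definable in $(K,<,v)$, and $w$ is a coarsening of $v$; thus \Autoref{cor:stablemb}~\eqref{cor:stablemb:1} shows that $\Delta=v(\OO_w^\times)$ is $\Log$-definable in the ordered abelian group $vK$. Fix an $\Log$-formula $\theta(z,\ul{z})$ and a tuple $\ul{g}$ from $vK$ such that $\theta(z,\ul{g})$ defines $\Delta$ in $vK$, and choose a tuple $\ul{d}$ from $K^\times$ with $v(\ul{d})=\ul{g}$.

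Finally I would transfer this back to $K$. Fix a tuple $\ol{c}$ from $K$ over which $\OO_v$ is $\Lr$-definable; then $\OO_v^\times$ is $\Lr$-definable over $\ol{c}$, and the ordered abelian group $vK=K^\times/\OO_v^\times$, with $v(x)\geq 0\iff x\in\OO_v$, is interpreted in the $\Lr$-structure $(K;\ol{c})$ in the standard way. Unwinding this interpretation turns the $\Log$-formula $\theta(v(x),v(\ul{d}))$ into an $\Lr$-formula in the single variable $x$ with parameters $\ol{c},\ul{d}$, and that formula defines $\{x\in K^\times\mid v(x)\in\Delta\}=\OO_w^\times$. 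Therefore $\OO_w=\OO_v\cup\OO_w^\times$ is $\Lr$-definable in $K$, i.e.\ $w$ is $\Lr$-definable.

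The real content sits in \Autoref{cor:stablemb}, and hence in the stable embeddedness theorem \Autoref{thm:stablemb} (for which henselianity of $v$ is essential); the surrounding steps are routine. The only mildly delicate point to get right is that the parameters $\ul{g}$ witnessing $\Log$-definability of $\Delta$ live in the interpreted sort $vK$ rather than in $K$ itself, so one must pass to lifts $\ul{d}$ in $K^\times$ before the interpretation converts the $\Log$-definition of $\Delta$ into an honest $\Lr$-definition of $\OO_w^\times$ inside $K$.
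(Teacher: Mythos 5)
Your proof is correct and follows essentially the same route as the paper: apply \Autoref{cor:stablemb}~\eqref{cor:stablemb:1} to get $\Log$-definability of $v(\OO_w^\times)$ in $vK$, then use the $\Lr$-interpretation of $vK$ in $K$ (via the $\Lr$-definable $v$) to pull this back to an $\Lr$-definition of $\OO_w$. The paper phrases the final step as $x\in\OO_w\iff v(x)\geq 0\vee v(x)\in H$, which is just your decomposition $\OO_w=\OO_v\cup\OO_w^\times$ in disguise; your extra care about lifting the parameters $\ul g$ from $vK$ to $\ul d$ in $K^\times$ is a detail the paper leaves implicit, but both arguments are the same.
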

	
	\begin{proof}
		\Autoref{cor:stablemb}~\eqref{cor:stablemb:1} shows that $H=v(\OO_w^\times)$ is $\Log$-definable in $vK$. 
		Since $wK=vK/H$, for any $x\in K$ we have $x\in \OO_w$ if and only if $v(x)\geq 0 \vee v(x)\in H$.
		As $v$ is $\Lr$-definable in $K$, the latter can be expressed as an $\Lr$-formula with parameters from $K$.
	\end{proof}
	
	For later use, we also deduce the following.
	\begin{proposition}\label{prop:ord-hens-comp}
		Let $(K,<,v)$ be an ordered henselian valued field.
		Then any $\Lor$-definable valuation $w$ on $K$ is comparable to $v$.
	\end{proposition}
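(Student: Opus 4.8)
The plan is to argue by contradiction, exploiting the fact that the convex subrings of an ordered field are linearly ordered by inclusion in order to reduce, via a convex-hull manoeuvre and \Autoref{cor:stablemb}, to a purely valuation-theoretic incompatibility between a henselian valuation and a valuation with cofinal valuation ring.

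First I would dispose of the trivial case: if $v$ is trivial then $\OO_v=K\supseteq\OO_w$ and $w$ is comparable to $v$, so assume $v$ non-trivial and suppose for a contradiction that $w$ is not comparable to $v$. Being henselian, $v$ is convex by \cite[Lemma~4.3.6]{englerprestel}, and the convex subrings of an ordered field form a chain under inclusion (if $\OO_1,\OO_2$ are convex subrings and $a\in\OO_1\setminus\OO_2$ with $a>0$, then $a$ exceeds every element of $\OO_2$, so $\OO_2\subseteq\OO_1$ by convexity of $\OO_1$); hence $w$ cannot be convex, for otherwise $\OO_w$ and $\OO_v$ would be comparable. Let $\OO_{w'}=\{x\in K:\exists y\,(y\in\OO_w\wedge |x|\leq y)\}$ be the convex hull of $\OO_w$ in $K$; this is a convex subring properly containing $\OO_w$, so $w'$ is a convex valuation which is a proper coarsening of $w$. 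Now the convex rings $\OO_{w'}$ and $\OO_v$ are comparable: if $\OO_{w'}\subseteq\OO_v$ then $\OO_w\subseteq\OO_v$ and $w$ refines $v$; if $\OO_{w'}=\OO_v$ then $w'=v$ coarsens $w$; either way $w$ is comparable to $v$, a contradiction. Hence $\OO_v\subsetneq\OO_{w'}$, so $w'$ is a proper coarsening of both $v$ and $w$. Moreover, since any convex subring of $K$ containing $\OO_w$ must contain $\OO_{w'}\supseteq\OO_v$, and the finest common coarsening $v\wedge w$ is convex (being a coarsening of the henselian valuation $v$) and contains $\OO_w$, one gets $\OO_{w'}=\OO_{v\wedge w}$; in particular the valuations $\overline v$ and $\overline w$ induced by $v$ and $w$ on $L:=Kw'$ are independent.

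Next I would pass to the residue field $L$. There $\overline v$ is a non-trivial henselian convex valuation; $\overline w$ is non-trivial and, since $w$ is an $\Lovf$-definable refinement of the henselian valuation $w'$ (indeed $w$ is already $\Lor$-definable in $K$), \Autoref{cor:stablemb}~\eqref{cor:stablemb:2} shows $\overline w$ is $\Lor$-definable in $(L,<)$; and by maximality of the convex hull there is no convex subring of $L$ strictly between $\OO_{\overline w}$ and $L$, i.e.\ $\OO_{\overline w}$ is cofinal in $L$ (note in passing that $L$ is then not real closed, since a real closed field is o-minimal and carries no non-trivial definable valuation). So the whole matter reduces to ruling out the following configuration: an ordered field $L$ carrying a non-trivial henselian convex valuation $\overline v$ and an independent non-trivial valuation $\overline w$ whose valuation ring is cofinal.

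The hard part will be this last step. My approach would be to pass to the henselization $L^{h}$ of $(L,\overline w)$: the extensions of $\overline v$ and $\overline w$ to $L^{h}$ are again both henselian — the first because $L^{h}/L$ is algebraic and $(L,\overline v)$ is henselian, the second by construction — and they remain independent; hence by the classical theorem that a field carrying two independent henselian valuations is separably closed (see \cite[Theorem~4.4.1]{englerprestel}), $L^{h}$ is separably closed, and so equals $\overline L$ since $\Char L=0$. But a henselization is an immediate algebraic extension, so $\overline L/L$ would be a proper immediate algebraic extension, which is impossible in equicharacteristic zero. The genuinely delicate point — where I expect the real work lies — is to secure that $\overline w$ has residue characteristic $0$, so that the ``no defect'' principle applies. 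For this I would again use henselianity of $\overline v$ together with the approximation theorem for the independent pair $\overline v,\overline w$: every element of $1+\MM_{\overline v}$ is an $n$-th power in $L$ for every $n$, while by independence such elements realise every value in $\overline wL$ and every non-zero residue in $L\overline w$, which forces $\overline wL$ and $(L\overline w)^{\times}$ to be divisible and should suffice to pin the residue characteristic down to $0$. (That the non-triviality of $v$ is essential can be seen from $(\Q,<)$, where the $2$-adic valuation is $\Lor$-definable with cofinal valuation ring although $\Q$ admits no non-trivial henselian valuation, so this last step genuinely needs $\overline v$.)
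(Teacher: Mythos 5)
Your first half matches the paper closely: you reduce to the finest common coarsening $w'=v\wedge w$, observe that on the residue field $L=Kw'$ the induced valuations $\overline v$ and $\overline w$ are non-trivial and independent, note that $\overline v$ is henselian, and use \Autoref{cor:stablemb}~\eqref{cor:stablemb:2} to get $\Lor$-definability of $\overline w$ in $(L,<)$. All of this is correct and is essentially what the paper does.

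The problem is the final step, where you try to rule out the resulting configuration by a henselization argument. Two things go wrong. First, the conclusion ``$L^{h}=\overline{L}$, so $\overline{L}/L$ is a proper immediate algebraic extension, which is impossible in equicharacteristic zero'' is not a contradiction: a \emph{non-henselian} valued field of equicharacteristic $0$ always has a proper immediate algebraic extension, namely its henselization (take $\Q(t)$ with the $t$-adic valuation). The ``no defect'' principle you invoke rules out proper immediate \emph{finite} extensions of a \emph{henselian} field of residue characteristic zero; here the relevant extension $L^{h}/L$ has a non-henselian base, and once you are at $L^{h}=\overline L$ the principle says nothing further. So even after securing residue characteristic $0$, there is no contradiction.

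Second, and more structurally: after you obtain that $\overline w$ is $\Lor$-definable on $(L,<)$, you never actually \emph{use} this definability in the final step. But the configuration you are trying to exclude --- an ordered field $L$ carrying a non-trivial henselian convex valuation $\overline v$ together with an independent non-trivial (not necessarily definable) valuation $\overline w$ --- is \emph{not} impossible purely valuation-theoretically; large ordered henselian fields such as $\R((t))$ generally carry many valuations independent from the power-series valuation. The definability of $\overline w$ is precisely what must be exploited. The paper does this via Hensel-minimality: the $\Lovf$-theory of $(L,<,\overline v)$ is $\omega$-h-minimal, so the infinite $\Lor$-definable set $\OO_{\overline w}$ contains a non-empty $\overline v$-open ball, and Weak Approximation for the independent pair $\overline v,\overline w$ then shows this ball cannot be contained in $\OO_{\overline w}$, a contradiction. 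Some appeal to definability of this strength seems unavoidable here, and your henselization route does not supply it.
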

	\begin{proof}
		We may suppose that neither $v$ nor $w$ are trivial.
		We first claim that the valuation ring $\OO_w$ contains a set $U \neq \emptyset$ which is open in the topology induced by $v$.
		This follows from a suitable form of relative quantifier elimination for henselian valued fields of residue characteristic zero:
		In the terminology of \cite{hensel-min}, the $\Lvf$-theory of the valued field $(K,v)$ is $\omega$-h-minimal \cite[Corollary 6.2.6~(1.)]{hensel-min}.
		Let $P$ be the unary predicate on $\operatorname{RV} = K^\times/(1+\MM_v)$ given by $P(a(1+\MM_v))$ if and only if $a>0$ for any $a\in K^\times$. Since  elements of $1+\MM_v$ are squares and hence automatically positive, the positive cone of $(K,<)$ consists of all $a\in K^\times$ satisfying $P(a(1+\MM_v))$.
		As by \cite[Theorem 4.1.19]{hensel-min} $\omega$-h-minimality is preserved under expansions by additional predicates on $\operatorname{RV}$, we obtain that the $\Lovf$-theory of $(K,<,v)$ is $\omega$-h-minimal.
		In models of $\omega$-h-minimal theories, any infinite definable set contains a non-empty $v$-open ball  \cite[Lemma 2.5.2]{hensel-min}, proving our claim about $\OO_w$.
		
		It follows that $w$ cannot be independent from $v$, since otherwise Weak Approximation \cite[Theorem 2.4.1]{englerprestel} would imply that the set $U \cap (K \setminus \OO_w)$ is non-empty as the intersection of a $v$-open and a $w$-open set.
		
		Let us now suppose for a contradiction that $w$ and $v$ are incomparable.
		Let $v_0$ be the finest common coarsening of $v$ and $w$, and $\overline v$, $\overline w$ the induced valuations on the residue field $Kv_0$, which are non-trivial and independent.
		Writing $<$ for the induced ordering on $Kv_0$, we now have an ordered henselian valued field $(Kv_0, <, \overline v)$ with a valuation $\overline w$ independent from $\ol{v}$.
		By \Autoref{cor:stablemb}~\eqref{cor:stablemb:2}, $\overline w$ is $\Lor$-definable in $(Kv_0, <)$.
		Since $\overline w$ is independent from the henselian valuation $\overline v$ on $Kv_0$, this contradicts the first part of the proof.
	\end{proof}
	
	\section{Almost Real Closed Fields}\label{sec:arc}
	
	Following the terminology of \cite{delon}, we call a field $K$ \emph{almost real closed} if it admits a henselian valuation $v$ such that $Kv$ is real closed. Almost real closed fields arise in many valuation theoretic contexts, and they have been studied extensively (under varying  names) both algebraically and model-theoretically (cf., e.g., \cite{brown,becker,delon}). Due to the Baer--Krull Representation Theorem (cf.\ \cite[p.\ 37f.]{englerprestel}), any almost real closed field admits at least one ordering. In this section, we consider $\Lor$- and $\Lr$-definability of  valuations (which are \emph{a priori} not necessarily convex) in almost real closed fields. We establish in \Autoref{thm:arcmain} that \emph{any} $\Lor$-definable valuation is already $\Lr$-definable and henselian. Thereby we give a negative answer to \cite[Question~7.3]{krappkuhlmannlehericyforum}.

	Let $K$ be an almost real closed field. Then for any prime $p\in \N$ there exists a coarsest henselian valuation on $K$, denoted by $v_p$, with the property that $Kv_p=(Kv_p)^p\cup [-(Kv_p)^p]$ (cf.~\cite[page~1126~f.]{delon}). 	
	
	\begin{lemma}\thlabel{lem:psubgp}
		Let $p\in \N$ be prime and let $(K,v)$ be a henselian valued field with real closed residue field. Then $v(\OO_{v_p}^\times)$ is the maximal $p$-divisible convex subgroup of $vK$. 
	\end{lemma}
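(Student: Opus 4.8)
The plan is to identify $v_p$ as a coarsening of $v$, to describe the convex subgroup $H := v(\OO_{v_p}^\times)$ of $vK$ through the valuation that $v$ induces on $Kv_p$, and then to establish separately that $H$ is $p$-divisible and that no convex subgroup of $vK$ properly containing $H$ is $p$-divisible. The only genuinely nonroutine ingredient will be an auxiliary Hensel's-Lemma computation; the rest is bookkeeping with coarsenings and with the defining property of $v_p$.

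First I would locate $v_p$. Note that $K$ is almost real closed (via $v$), so $v_p$ is defined. Since $Kv$ is real closed, every nonzero element of $Kv$ is a $p$-th power or the negative of one, so $Kv = (Kv)^p \cup [-(Kv)^p]$; as $v$ is moreover henselian, $v$ is one of the henselian valuations satisfying the condition defining $v_p$, and hence $v_p$ is a coarsening of $v$ (possibly $v_p = v$). Put $H := v(\OO_{v_p}^\times)$, a convex subgroup of $vK$. By the standard correspondence between coarsenings and convex subgroups together with composition of valuations (see \cite{englerprestel}), $v$ induces a valuation $\bar v$ on $Kv_p$ whose value group is $H$ and whose residue field is $Kv$, and $\bar v$ is henselian because $v$ is. The $p$-divisibility of $H$ is then immediate: for any $\alpha \in Kv_p^\times$, the defining property of $v_p$ yields $\alpha = \pm\beta^p$ with $\beta \in Kv_p^\times$, so $\bar v(\alpha) = p\,\bar v(\beta) \in pH$, and letting $\alpha$ range over $Kv_p^\times$ gives $H = pH$.

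For the maximality I would isolate the auxiliary claim that \emph{any henselian valued field $(L,u)$ whose residue field $Lu$ is real closed and whose value group $uL$ is $p$-divisible satisfies $L = L^p \cup [-L^p]$}. To prove it, take $a \in L^\times$, write $u(a) = p\gamma$ with $\gamma \in uL$, and choose $c \in L^\times$ with $u(c) = \gamma$, so that $ac^{-p}$ is a $u$-unit. Since $Lu$ is real closed, the residue of $ac^{-p}$ equals $\varepsilon\delta^p$ for some sign $\varepsilon \in \{1,-1\}$ and some $\delta \in (Lu)^\times$; lifting $\delta$ to a unit $d$ of $\OO_u$ gives $ac^{-p}\varepsilon d^{-p} \in 1 + \MM_u$. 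As $Lu$ has characteristic $0$, the residue characteristic is $0$, so Hensel's Lemma applied to $X^p - (1+m)$, whose reduction $X^p - 1$ has the simple root $1$, shows $1 + \MM_u \subseteq (L^\times)^p$; therefore $ac^{-p}\varepsilon d^{-p} = e^p$ for some $e \in L^\times$ and $a = \varepsilon(cde)^p$. (For odd $p$ the sign is superfluous.)

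Finally I would derive the maximality. Suppose $\tilde H$ is a $p$-divisible convex subgroup of $vK$ that properly contains $H$, and let $w$ be the corresponding coarsening of $v$, so $v(\OO_w^\times) = \tilde H$; then $w$ is henselian, strictly coarser than $v_p$, and $v$ induces a henselian valuation $\bar v'$ on $Kw$ with value group $\tilde H$ and residue field $Kv$, which is real closed. The auxiliary claim applied to $(Kw, \bar v')$ gives $Kw = (Kw)^p \cup [-(Kw)^p]$, so $w$ is a henselian valuation satisfying the condition defining $v_p$ yet strictly coarser than $v_p$, contradicting the minimality of $v_p$. Hence no such $\tilde H$ exists; as the convex subgroups of $vK$ form a chain and $H$ is itself $p$-divisible, $H$ is the maximal $p$-divisible convex subgroup of $vK$. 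I expect the auxiliary claim to be the main obstacle, though it is brief.
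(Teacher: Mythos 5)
Your proof is correct and follows the same two-part decomposition as the paper (first that $H := v(\OO_{v_p}^\times)$ is $p$-divisible, then that nothing larger is), with the $p$-divisibility argument essentially identical. The one genuine difference is that the paper handles maximality by citing Delon--Farr\'e (Proposition~2.5(iv) of \cite{delon}, that $v_pK$ has no non-trivial $p$-divisible convex subgroup), whereas you unpack that citation: your auxiliary claim that a henselian valued field with real closed residue field and $p$-divisible value group satisfies $L = L^p \cup [-L^p]$, proved via a lift-and-Hensel argument in equicharacteristic $0$, recovers the content of the cited proposition and then yields the contradiction with the coarsest-ness of $v_p$. This buys you a self-contained proof at the cost of length; the underlying logical structure is the same.
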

	\begin{proof}
		By \cite[Proposition~2.5~(iv)]{delon}, $v_p K = vK/v(\OO_{v_p}^\times)$ has no non-trivial $p$-divisible convex subgroup, so $v(\OO_{v_p}^\times)$ contains the maximal $p$-divisible convex subgroup of $vK$.
		On the other hand, $v$ induces a valuation on the residue field $Kv_p$ with value group $v(\OO_{v_p}^\times)$, from which it is easy to see that $v(\OO_{v_p}^\times)$ must itself be $p$-divisible by the defining property of $v_p$ (or see \cite[Lemma 2.4~(iii)]{delon}).
	\end{proof}

	With the results of the last section at our disposal, we can now imitate the proof of \cite[Theorem~4.4]{delon} to obtain the following.
	\begin{theorem}\thlabel{thm:arcmain}
		Let $K$ be an almost real closed field and let $<$ be any ordering on $K$. Then any $\Lor$-definable valuation on $(K,<)$ is henselian and $\Lr$-definable.
	\end{theorem}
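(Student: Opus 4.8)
The plan is to follow the strategy of the proof of \cite[Theorem~4.4]{delon}, feeding in the stable embeddedness results of \Autoref{sec:qe} in place of the more hands-on arguments used there. Throughout, let $w$ be an $\Lor$-definable valuation on $(K,<)$, which we may assume to be non-trivial, and fix a henselian valuation $v$ on $K$ with $Kv$ real closed; then $(K,<,v)$ is an ordered henselian valued field (recall that henselian valuations on ordered fields are automatically convex, cf.~\cite[Lemma~4.3.6]{englerprestel}).

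\emph{Step 1: $w$ is henselian.} By \Autoref{prop:ord-hens-comp}, $w$ is comparable to $v$. If $w$ is a refinement of $v$, i.e.\ $\OO_w\subseteq\OO_v$, then the induced valuation $\ol w$ on $Kv$ is $\Lor$-definable in $(Kv,<)$ by \Autoref{cor:stablemb}~\eqref{cor:stablemb:2}; but $Kv$ is real closed and hence, being o-minimal, carries no non-trivial $\Lor$-definable valuation, so $\ol w$ is trivial and $\OO_w=\OO_v$. In every case $\OO_v\subseteq\OO_w$, so $w$ is a coarsening of the henselian valuation $v$ and is therefore itself henselian. This settles the first assertion.

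\emph{Step 2: the valuations $v_p$.} For each prime $p$ the valuation $v_p$ introduced before \Autoref{lem:psubgp} is $\emptyset$-$\Lr$-definable on $K$ — this is established in Delon and Farr\'e's analysis of almost real closed fields (see \cite[Section~4]{delon}; it can also be recovered from \Autoref{lem:psubgp}, which shows that $v_pK$ has no non-trivial $p$-divisible convex subgroup and is $p$-regular, together with the ring-language analogue of \Autoref{cor:convexmain}~\eqref{cor:convexmain:1} proved in \cite[Lemmas~2.3.6 and 2.3.7]{hongthesis}). Since $Kv$ is real closed we have $Kv=(Kv)^p\cup[-(Kv)^p]$ for every $p$, so the minimality in the definition of $v_p$ shows that $v_p$ is a coarsening of $v$. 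Consequently $v$, $w$ and all the $v_p$ lie in the chain, linearly ordered by inclusion, of valuation rings on $K$ containing $\OO_v$; in particular $w$ is comparable to each $v_p$ (this also follows from \Autoref{prop:ord-hens-comp} applied with $v_p$ in place of $v$).

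\emph{Step 3: $\Lr$-definability of $w$, and the main obstacle.} If $w$ is a coarsening of $v_p$ for some prime $p$, then $w$ is an $\Lor$-definable coarsening of the $\Lr$-definable henselian valuation $v_p$, hence $\Lr$-definable by \Autoref{cor:coarseninglr}, and we are done. It remains to treat the case in which $w$ is strictly finer than every $v_p$; this is the technical heart, where we imitate most closely the argument of \cite[Theorem~4.4]{delon}. Here $H:=v(\OO_w^\times)$ is a convex subgroup of $vK$ with $wK\cong vK/H$, and $H$ is $\Log$-definable in $vK$ by \Autoref{cor:stablemb}~\eqref{cor:stablemb:1}; the aim is to deduce, from the $\Lor$-definability of $w$ (and not merely from the $\Log$-definability of $H$) together with \Autoref{lem:psubgp} and the structure theory of definable convex subgroups of ordered abelian groups, that in this situation $wK$ must already be discretely ordered or fail to be closed in its divisible hull. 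Granting this, $w$ is $\Lr$-definable by the ring-language definability results for henselian valuations with such value groups, namely \cite[Corollary~2]{hong} in the discrete case and \cite[Theorem~A]{krappkuhlmannlink} in the dense case. The main difficulty is precisely this dichotomy: ruling out, using that $w$ itself is $\Lor$-definable, that $w$ can be a genuinely wild common refinement of all the $v_p$ — for instance one with divisible value group — which is where the concrete combinatorial content of the argument of \cite{delon} enters and the analysis is most delicate.
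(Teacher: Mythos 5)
Your Steps 1 and 2 are sound and are essentially what the paper does (the paper uses the canonical henselian valuation $v_K$, which on an almost real closed field is the finest convex valuation and has real closed residue field, but the logic is the same). Your handling of the case that $w$ coarsens some $v_p$ (via \Autoref{cor:coarseninglr} and the known $\Lr$-definability of the $v_p$) is also a valid route, although the paper instead invokes \cite[Theorem~4.4]{delon} directly as the criterion for $\Lr$-definability, feeding in \Autoref{cor:stablemb}~\eqref{cor:stablemb:1} for the $\Log$-definability of $v_K(\OO_w^\times)$ and \Autoref{lem:psubgp} to identify $v_K(\OO_{v_p}^\times)$ with the maximal $p$-divisible convex subgroup.

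The gap is in Step 3, and you have in fact misidentified the remaining case. By \cite[Corollary~4.3]{delon} together with \Autoref{lem:psubgp}, whenever $w$ is a \emph{strict} coarsening of $v_K$, i.e.\ whenever $v_K(\OO_w^\times)$ is non-trivial, one already has $\OO_{v_p}\subseteq\OO_w$ for some prime $p$. So the case in which $w$ strictly refines all $v_p$ is precisely $w = v_K$, and no argument about $wK$ being discretely ordered or failing to be closed in its divisible hull is needed (nor is such a dichotomy true in general — $v_KK$ can perfectly well be divisible). The paper closes this case with a compactness trick: the set of formulas $\{\,x > n \wedge v_K(x) = 0 \mid n\in\N\,\}$ is a type, so in a suitable elementary extension $(L,<,v^*)\succeq(K,<,v_K)$ the valuation $v^*$ is a \emph{strict} coarsening of the natural valuation $v_L$; the first case then applies in $L$, making $v^*$ $\Lr$-definable there, and $\Lr$-definability of $v_K$ in $K$ follows by elementary equivalence. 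Without some such reduction your proof does not go through, since the route you sketch (proving the value-group dichotomy) does not terminate.
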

	
	\begin{proof}
		Let us denote by $v_K$ the canonical henselian valuation on $K$.
		Since $K$ is almost real closed, $v_K$ coincides with the natural, i.e., finest convex valuation $\vnat$ on $(K,<)$ for any ordering $<$ on $K$, as $\vnat$ is henselian \cite[Proposition~2.1~(iv)]{delon}.
		
		Let $v$ be an $\Lor$-definable valuation on $K$.
		By \Autoref{prop:ord-hens-comp}, $v$ and  $v_K$ are comparable.
		If $v$ is a coarsening of $v_K$, then it is also henselian. Otherwise, $v_K$ is a strict coarsening of $v$.  Thus, by \Autoref{cor:stablemb}~\eqref{cor:stablemb:2} the non-trivial valuation that $v$ induces on $Kv_K$ is $\Lor$-definable in $(Kv_K,<)$, contradicting that $Kv_K$ is real closed. Hence, $v$ is henselian.
		
		In order to show that $v$ is $\Lr$-definable, by \cite[Theorem~4.4]{delon} it suffices to verify that $G_v:=v_K(\OO_v^\times)$ is $\Log$-definable in $v_KK$ and that $\OO_{v_p}\subseteq \OO_v$ for some prime $p\in \N$. The first condition follows from \Autoref{cor:stablemb}~\eqref{cor:stablemb:1}. For the other condition, we distinguish between two cases.
		
		\textbf{Case 1:} \emph{$v\neq v_K$.} Then $G_v\neq \{0\}$ and by \cite[Corollary~4.3]{delon} we have $G_p\leq G_v$ for some prime $p\in \N$, where $G_p$ denotes the maximal $p$-divisible convex subgroup of $v_KK$. Now $G_p=v_K(\OO_{v_p}^\times)$ by \Autoref{lem:psubgp}. Hence, $\OO_{v_p}\subseteq \OO_v$, as required. This establishes that $v$ is $\Lr$-definable in $K$.
		
		\textbf{Case 2:} \emph{$v=v_K$.} 
		Consider the set of formulas
		$$p(x)=\{x>n\wedge v(x)=0\mid n\in \N\}.$$
		This set is finitely satisfiable in $(K,<,v)$, i.e., a type. Hence, for some elementary extension
		$(L,<,v^*)$
		of $(K,<,v)$, there is some $x\in L$ with $v^*(x)=0$ and $x>n$ for all $n\in \N$. In particular, $v^*$ is a strict coarsening of the natural valuation $v_L$ on $(L,<)$. 
		By Case 1, $v^*$ is $\Lr$-definable in $L$. Thus, there exists an $\Lr$-formula $\varphi(x,\ul{y})$ such that $$(L,<,v^*)\models \exists \ul{y} \forall x \ (v^*(x)\geq 0 \leftrightarrow \varphi(x,\ul{y})).$$ By elementary equivalence, there exists $\ul{b}\in K$ such that $\varphi(x,\ul{b})$ defines $v$ in $K$.
	\end{proof}

	\section{Henselian Valuations}\label{sec:henselian}
	
	We now consider definability in general ordered henselian valued fields.
	Throughout this section, we freely use Farré's Ax--Kochen--Ershov Principles \cite[Corollary 4.2]{farre} (with all levels equal to $1$ in the notation there), stating that two ordered henselian valued fields are elementarily equivalent in $\Lovf$ if and only if the ordered residue fields and the value groups are so, and similarly for elementary extensions.
	
	Our first step is to show that a henselian valuation that is
	`slippery' in a precise sense involving residue field and value group cannot be $\Lor$-definable.
	\begin{lemma}\thlabel{lem:glitschig}
		Let $(K,<,v)$ be an ordered henselian valued field satisfying
		\begin{align*}(Kv,<)\equiv (L\pow{\Q},<) \text{ and }vK\equiv \Gamma \oplus \Q
		\end{align*}
		for some ordered field $L$ and some ordered abelian group $\Gamma$.
		Then $v$ is not $\Lor$-definable.
	\end{lemma}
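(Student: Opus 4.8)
\emph{Plan of proof.} I would argue by contradiction, exploiting that the two hypotheses make $(K,<,v)$ ``self-similar'': inserting an additional $\Q$-layer into the residue field, or into the value group, is invisible to elementary equivalence, so $v$ can be shifted by an automorphism of a suitable elementary extension.

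Suppose $v$ is $\Lor$-definable, say $\OO_v = \varphi(K,\bar c)$ for an $\Lor$-formula $\varphi$ and a tuple $\bar c$ from $K$. Fix a sufficiently saturated elementary extension $(K^*,<,v^*) \succeq (K,<,v)$ in the language $\Lovf$; then $\varphi(x,\bar c)$ still defines $\OO_{v^*}$ in $(K^*,<)$. It suffices to produce a \emph{proper} refinement $w$ of $v^*$ together with an isomorphism $\sigma\colon (K^*,<,v^*) \to (K^*,<,w)$ of ordered henselian valued fields that restricts to the identity on $K$: then $\sigma$ is an $\Lor$-automorphism of $(K^*,<)$ fixing $K$, hence fixing $\bar c$, while $\sigma(\OO_{v^*}) = \OO_w \subsetneq \OO_{v^*}$, and so $\OO_{v^*} = \varphi(K^*,\bar c) = \varphi(K^*,\sigma(\bar c)) = \sigma(\varphi(K^*,\bar c)) = \OO_w$ --- a contradiction.

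I would obtain $w$ as the composition of $v^*$ with a nontrivial convex valuation $u^*$ on the residue field $K^*v^*$ chosen to be trivial on $Kv$; then $w$ is a henselian convex valuation on $K^*$ properly refining $v^*$, with $w|_K = v$, residue field $K^*w = (K^*v^*)u^*$, and value group $wK^*$ an extension of $v^*K^*$ by the convex subgroup $u^*(K^*v^*)$. The point is to arrange $u^*$ so that in addition $Kv \preceq (K^*v^*)u^* \cong_{Kv} K^*v^*$ and $wK^* \cong_{vK} v^*K^*$; granting this, Farré's Embedding Lemma \cite[Theorem~3.4]{farre} (all levels equal to $1$), fed with these isomorphisms of residue fields and value groups and run as a back-and-forth over small elementary substructures using the saturation of $(K^*,<,v^*)$ and of $(K^*,<,w)$, delivers the isomorphism $\sigma$ over $K$. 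To arrange the residue-field side, I would use that $(Kv,<) \equiv (L\pow{\Q},<) \equiv (L\pow{\Q}\pow{\Q},<)$ --- the second equivalence because the theory of divisible ordered abelian groups is complete (so $\Q \equiv \Q\oplus\Q$), whence Farré's Ax--Kochen--Ershov principle \cite[Corollary~4.2]{farre} applied to the canonical Hahn valuations --- together with the fact that $L\pow{\Q}\pow{\Q}$ carries its ``outer'' Hahn valuation, which is nontrivial, convex, has divisible value group, and has residue field $L\pow{\Q} \equiv Kv$; since $K^*v^*$ is saturated, such a $u^*$ can then be found inside it. To arrange the value-group side, I would use that $v^*K^* \succeq vK \equiv \Gamma\oplus\Q$ and that, by $\Q \equiv \Q\oplus\Q$ and the Feferman--Vaught-type preservation of elementary equivalence under lexicographic sums of ordered abelian groups, $\Gamma\oplus\Q \equiv \Gamma\oplus\Q\oplus\Q$; thus $vK$ is elementarily equivalent to an ordered abelian group that is an extension of an ordered abelian group $\equiv vK$ by a divisible convex subgroup, and transferring this to the saturated $v^*K^*$ --- matched up with the convex subgroup $u^*(K^*v^*)$ and with $vK$ --- yields the isomorphism $wK^* \cong_{vK} v^*K^*$.

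The main obstacle is exactly this assembly: realizing the two self-similar structures --- in the residue field and in the value group --- compatibly inside a \emph{single} elementary extension, keeping the identifications compatible with the coarsening relation between $w$ and $v^*$ so that Farré's Embedding Lemma applies alternately in both directions, and ensuring the intermediate residue fields and value groups stay saturated enough for the back-and-forth to terminate. All of this rests on the equicharacteristic-$0$ principle that the theory and the saturation of an ordered henselian valued field are governed by its ordered residue field and its value group. A subsidiary point to verify carefully is the Feferman--Vaught fact for lexicographic sums of ordered abelian groups used above, although for the divisible ``tails'' at issue it is classical.
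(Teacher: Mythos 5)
Your plan is genuinely different from the paper's. You aim for an automorphism argument: build a saturated elementary extension $(K^*,<,v^*)$, manufacture a proper refinement $w$ of $v^*$ with $w|_K=v$, and then produce an $\Lor$-automorphism $\sigma$ of $(K^*,<)$ fixing $K$ pointwise with $\sigma(\OO_{v^*})=\OO_w$. The paper instead passes (by elementary equivalence, not elementary extension) to the concrete model $L\pow\Q\pow\Q\pow\Gamma$ with its power-series valuation, and then applies the already-established stable embeddedness corollaries (\Autoref{cor:stablemb}) twice --- first to the residue field of the coarsening $v_\Gamma$ to push $\Lor$-definability of $v$ down to $\Lor$-definability of the induced valuation on $L\pow\Q\pow\Q$, then to the value group $\Q\oplus\Q$ of that valuation --- finally contradicting the fact that a divisible ordered abelian group has no non-trivial proper $\Log$-definable subgroup. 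Both routes ultimately lean on Farr\'e's Embedding Lemma, but the paper calls it once, inside the general relative QE lemma, and then reuses the resulting Corollary~\ref{cor:stablemb} cleanly, whereas you are in effect re-running the embedding lemma bespoke for this situation.

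The concrete gap in your proposal is exactly what you flag as ``the main obstacle'', and it is not merely a detail to be checked: you need an isomorphism $(K^*,<,v^*)\cong_K(K^*,<,w)$, which a two-sided Farr\'e back-and-forth would give you only if \emph{both} $\Lovf$-structures are saturated of the same cardinality and have matching (over $Kv$ and $vK$) saturated residue fields and value groups. You chose $(K^*,<,v^*)$ saturated, but $(K^*,<,w)$ is a derived structure --- same underlying ordered field, finer valuation --- and there is no reason its $\Lovf$-theory is saturated, nor that $K^*w=(K^*v^*)u^*$ is a saturated model of $\Th(L\pow\Q)$ of the right cardinality, nor that $wK^*$ is a saturated model of $\Th(\Gamma\oplus\Q)$ matching $v^*K^*$ over $vK$. (If instead you only build an $\Lor$-elementary \emph{embedding} $(K^*,<,w)\hookrightarrow(K^*,<,v^*)$ over $K$, it need not be onto and the contradiction evaporates.) On top of this, the ``Feferman--Vaught-type preservation of elementary equivalence under lexicographic sums'' you invoke to get $\Gamma\oplus\Q\equiv\Gamma\oplus\Q\oplus\Q$ is not a standard fact; the relevant tool is Schmitt's splitting-off of divisible convex subgroups, and deploying it requires an argument (as the paper does in Case~1 of \Autoref{thm:main}, where it is used inside a saturated extension, not as a syntactic identity $\Gamma\oplus\Q\equiv\Gamma\oplus\Q\oplus\Q$). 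Until those saturation/matching issues are resolved, the proposal does not yet constitute a proof --- and resolving them would essentially amount to re-deriving, in a less modular way, the stable embeddedness results (\Autoref{thm:stablemb}, \Autoref{cor:stablemb}) that the paper applies directly.
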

	
	\begin{proof}
		Assume, for a contradiction, that $v$ were $\Lor$-definable. Fix an $\Lor$-formula $\varphi({x},\ul{y})$ such that for some $\ul{b}\in K$ the valuation ring $\OO_v$ is defined by $\varphi({x},\ul{b})$.		
		Since some instance of $\varphi(x, \ul{y})$ also defines the
		valuation $w$ in any
		$$(M,<,w) \equiv (K,<,v),$$
		we may assume that $(K,<,v)$ is in fact equal to $$(L((\mathbb{Q}))\underbrace{((\mathbb{Q}))((\Gamma))}_w, <, w),$$ where $w$ denotes the power series valuation with value group
		$\Gamma \oplus \mathbb{Q}$ ordered lexicographically.
		Let $v_\Gamma$ denote the power series valuation on $K$ with value group $\Gamma$ and residue field $L\pow{\mathbb{Q}}\pow{\mathbb{Q}}$.
		
		Applying \Autoref{cor:stablemb}~\eqref{cor:stablemb:2} to the ordered residue
		field of $(K,<,v_\Gamma)$, the $\Lor$-definability of
		$v$ implies that $\overline{v}$ (i.e., the valuation induced by $v$ on the residue field of its coarsening $v_\Gamma$) is also $\Lor$-definable on
		$(L((\mathbb{Q}))((\mathbb{Q})), <)$.
		
		Applying \Autoref{cor:stablemb}~\eqref{cor:stablemb:1} to the
		value group of $(L((\mathbb{Q}))((\mathbb{Q})), <, \overline{v})$,
		the convex subgroup $\Q$ corresponding to $\ol{v}$ is already
		$\Log$-definable in $\Q \oplus \Q$.
		This is a contradiction, as 
		$\Q \oplus \Q$ is divisible, and divisible ordered abelian groups
		admit no non-trivial proper $\Log$-definable subgroups.
	\end{proof}
	
	We now prove a lemma used to define coarsenings of a valuation that is
	essentially already shown in \cite{jahnkekoenigsmanncoarsenings}. It states
	that although in an ordered abelian group $G$, the smallest convex subgroup
	containing a given element $\gamma \in G$ need not be definable (e.g., the convex subgroup generated by $(0,1)$ in the lexicographic sum 
	$\mathbb{Q} \oplus \mathbb{Z}$ is not definable), it is definable up to $p$-divisible `noise'.
	\begin{lemma}\label{lem:pdivsub}
		Let $p \in \N$ be prime.
		There exists an $\Log$-formula $\varphi(x,y)$ such that the following holds:
		Let $G$ be an ordered abelian group and $\gamma \in G^{>0}$, and let $\langle \gamma \rangle$ denote the smallest convex subgroup
		of $G$ that contains $\gamma$.
		Then the set $\Delta_\gamma\subseteq G$ defined by $\varphi(x,\gamma)$ in $G$ is  
		the maximal convex subgroup of $G$ containing $\gamma$ such that $\Delta_\gamma/\langle \gamma \rangle$ is $p$-divisible.
	\end{lemma}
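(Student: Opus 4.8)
The plan is to split the statement into a purely algebraic description of $\Delta_\gamma$ and a uniform first-order encoding, the latter being essentially what is already carried out in \cite{jahnkekoenigsmanncoarsenings}.

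First I would check that $\Delta_\gamma$ is well defined. Since the convex subgroups of an ordered abelian group are linearly ordered by inclusion, the convex subgroups $\Delta$ with $\langle\gamma\rangle\subseteq\Delta$ and $\Delta/\langle\gamma\rangle$ $p$-divisible form a chain; it is nonempty (it contains $\langle\gamma\rangle$) and closed under unions of chains, because a union of a chain of convex subgroups is again a convex subgroup and a directed union of $p$-divisible groups is $p$-divisible. Hence the union of this family is again a member, namely its greatest element $\Delta_\gamma$. Next I would record the pointwise criterion: $x\in\Delta_\gamma$ if and only if $\langle x,\gamma\rangle/\langle\gamma\rangle$ is $p$-divisible, where $\langle x,\gamma\rangle$ denotes the smallest convex subgroup containing both $x$ and $\gamma$. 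Indeed, if $x\in\Delta_\gamma$ then $\langle x,\gamma\rangle\subseteq\Delta_\gamma$ and $\langle x,\gamma\rangle/\langle\gamma\rangle$ is a convex subgroup of the $p$-divisible group $\Delta_\gamma/\langle\gamma\rangle$, hence itself $p$-divisible (a convex subgroup of a $p$-divisible ordered abelian group is $p$-divisible); conversely, if $\langle x,\gamma\rangle/\langle\gamma\rangle$ is $p$-divisible then $\langle x,\gamma\rangle$ belongs to the family above, so $\langle x,\gamma\rangle\subseteq\Delta_\gamma$. Finally, since $\langle\gamma\rangle$ is convex, $\langle x,\gamma\rangle/\langle\gamma\rangle$ is $p$-divisible exactly when $\langle x,\gamma\rangle\subseteq pG+\langle\gamma\rangle$ (if $c=pg+u$ with $u\in\langle\gamma\rangle\subseteq\langle x,\gamma\rangle$, then $pg=c-u\in\langle x,\gamma\rangle$, so $g\in\langle x,\gamma\rangle$ by convexity). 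Combining these, $\Delta_\gamma$ is precisely the largest convex subgroup of $G$ contained in $pG+\langle\gamma\rangle$.

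It then remains to produce a single $\Log$-formula $\varphi(x,y)$ which, in every ordered abelian group $G$ and for every $\gamma\in G^{>0}$, defines the largest convex subgroup contained in $pG+\langle\gamma\rangle$. This is essentially the content of the corresponding lemma in \cite{jahnkekoenigsmanncoarsenings}, which I would cite, verifying only that their formulation (phrased directly via $p$-divisibility of quotients) matches the reformulation just obtained. Informally, $\varphi(x,y)$ should say: $|x|\le|y|$, or else every $h$ with $|y|\le h\le|x|$ lies in $pG+\langle y\rangle$. The main obstacle --- and the reason the lemma needs proving at all --- is exactly the first-order encoding of the condition ``$h\in pG+\langle y\rangle$'': the subgroup $\langle\gamma\rangle$ is in general not definable (as in the example of $\Q\oplus\Z$ mentioned just before the statement), so one cannot simply write ``there is $n\in\N$ with $|h-pg|\le ny$''. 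The point established in \cite{jahnkekoenigsmanncoarsenings} is that the $p$-divisibility ``noise'' provides precisely the slack that lets one dispense with this unbounded quantifier, so that $\Delta_\gamma$ --- which strictly contains $\langle\gamma\rangle$ in general --- turns out to be uniformly definable even though $\langle\gamma\rangle$ itself need not be. With such a formula in hand, one concludes by reading off, via the reformulation in the previous paragraph, that $\varphi(x,\gamma)$ defines the $\Delta_\gamma$ described in the statement.
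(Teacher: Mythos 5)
Your algebraic analysis is correct and in fact slightly more explicit than what the paper records: your pointwise criterion ($x \in \Delta_\gamma$ iff $\langle x,\gamma\rangle/\langle\gamma\rangle$ is $p$-divisible) and the resulting description of $\Delta_\gamma$ as the largest convex subgroup contained in $pG+\langle\gamma\rangle$ are both valid, and the well-definedness argument via directed unions is sound. However, there is a genuine gap at precisely the step the lemma is about: you never write down a formula $\varphi(x,y)$. You correctly identify the obstacle --- that $\langle\gamma\rangle$ is not definable, so the condition ``$h\in pG+\langle\gamma\rangle$'' cannot be transcribed literally --- but then defer to \cite{jahnkekoenigsmanncoarsenings} with the assurance that the $p$-divisible ``slack'' resolves it, without exhibiting how. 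Your informal candidate (``$|x|\le|y|$, or every $h$ with $|y|\le h\le|x|$ lies in $pG+\langle y\rangle$'') still contains $\langle y\rangle$ and so does not overcome the obstacle you yourself raised.

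The missing ingredient is the concrete replacement of $\langle\gamma\rangle$ by the \emph{definable} interval $[0,p\gamma]$. The paper's formula is
\[
\varphi(x,\gamma) :\iff [0,p|x|]\subseteq [0,p\gamma]+pG,
\]
and it invokes \cite[Lemma~4.1]{jahnkekoenigsmanncoarsenings} only for the nontrivial algebraic fact that the set so defined is a convex subgroup $\Delta_\gamma \ni \gamma$ with no nontrivial $p$-divisible convex subgroup in $G/\Delta_\gamma$; the remaining two checks (maximality, and $p$-divisibility of $\Delta_\gamma/\langle\gamma\rangle$ via $[0,p\gamma]\subseteq\langle\gamma\rangle$) are short and direct. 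So the division of labour in the paper is different from what you propose: rather than deriving an abstract characterisation and then hunting for a formula, the paper starts from the formula and cites the reference for its structural property. To repair your proposal you would need to show that your characterisation ``largest convex subgroup contained in $pG+\langle\gamma\rangle$'' coincides with ``largest convex subgroup contained in $pG+[0,p\gamma]$'' --- this is exactly where the $p$-divisibility slack earns its keep, and it is a real verification, not an immediate consequence of your reformulation.
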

	
	\begin{proof}
		We set
		$\varphi(x,\gamma)$ to express
		$$[0,p|x|]\subseteq [0,p\gamma]+pG.$$
		By \cite[Lemma~4.1]{jahnkekoenigsmanncoarsenings}, $\Delta_\gamma$ is a convex subgroup of $G$ with $\gamma\in \Delta_\gamma$ such that no non-trivial convex subgroup of $G/\Delta_\gamma$ is $p$-divisible.
		In particular, for every convex subgroup $\Delta$ of $G$ properly containing $\Delta_\gamma$, the group  $\Delta/\langle\gamma\rangle$ is not $p$-divisible, since it has $\Delta/\Delta_\gamma \leq G/\Delta_\gamma$ as a quotient.
		
		On the other hand, every positive element $\delta \in \Delta_\gamma$ can by definition be written as the sum of an element of $[0, p\gamma] \subseteq \langle\gamma\rangle$ and an element of $pG$, which implies that $\Delta_\gamma/\langle\gamma\rangle$ is $p$-divisible.
	\end{proof}

	We extract the following consequence of the definability results of \cite{jahnkekoenigsmann1}.
	See the introduction of that paper for the notion of $p$-henselianity used in the proof.
	\begin{proposition}\label{prop:defble-refinement}
		Let $(K,v)$ be a henselian valued field such that the residue field $Kv$ is neither separably closed nor real closed.
		Then there exists an $\Lr$-definable (not necessarily henselian) refinement $w$ of $v$.
	\end{proposition}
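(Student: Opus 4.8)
The plan is to invoke, for a well-chosen prime $p$, the $\Lr$-definability of the canonical $p$-henselian valuation from \cite{jahnkekoenigsmann1}, and then to verify that this valuation actually \emph{refines} $v$. We may assume $v$ is non-trivial, as otherwise $w=v$ works.

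First I would fix a prime $p$ such that $Kv$ is not $p$-closed, i.e.\ such that $Kv$ admits a Galois extension of degree $p$; such a $p$ exists precisely because $Kv$ is neither separably closed nor real closed. (If $Kv$ is formally real this forces $p=2$, via the quadratic extension $Kv(\sqrt{-1})$, and the hypothesis that $Kv$ is not real closed is exactly what keeps us in the scope of the relevant $p$-henselian results of \cite{jahnkekoenigsmann1} in that case; otherwise an odd $p$ may be taken.) Since $v$ is henselian it is in particular $p$-henselian, so $(K,v)$ is a $p$-henselian valued field with $v$ non-trivial whose residue field is not $p$-closed; a fortiori $K$ itself is not $p$-closed.

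Now let $w$ be the canonical $p$-henselian valuation $v^p_K$ of $K$. By the definability results of \cite{jahnkekoenigsmann1}, $w$ is $\Lr$-definable --- over the prime field when $\zeta_p\in K$ or $\operatorname{char}K=p$, and in general after the standard descent from the prime-to-$p$ extension $K(\zeta_p)$. It remains to show $\OO_w\subseteq\OO_v$. For this I would use two facts about $p$-henselian valuations on a fixed field, both analogues of classical facts for henselian valuations: (a) two $p$-henselian valuations, at least one of which has non-$p$-closed residue field, are comparable; and (b) among $p$-henselian valuations, non-$p$-closedness of the residue field passes to coarsenings while $p$-closedness of the residue field passes to refinements. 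If every $p$-henselian valuation on $K$ has non-$p$-closed residue field, then $v^p_K$ is the finest one and so refines $v$ by (a); otherwise $v^p_K$ is the coarsest $p$-henselian valuation with $p$-closed residue field, and by (a) it is comparable to $v$ but cannot be a proper coarsening of $v$, since a coarsening of the (non-$p$-closed-residue) valuation $v$ again has non-$p$-closed residue by (b). Either way $w$ refines $v$, as required.

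The main obstacle is essentially bookkeeping around the (technical) statements imported from \cite{jahnkekoenigsmann1}: pinning down the definition of the canonical $p$-henselian valuation precisely enough to see that it lies \emph{above} $v$ rather than below it --- this is exactly the role of (a) and (b) --- and choosing $p$ so that the cited definability theorem genuinely applies, in particular dealing with the forced case $p=2$ by using that $Kv$ is not real closed.
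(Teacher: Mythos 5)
Your high-level strategy --- invoking the $\Lr$-definability of the canonical $p$-henselian valuation from \cite{jahnkekoenigsmann1} and then arguing comparability with $v$ --- is the same as the paper's, and your comparability arguments (a) and (b) are sound in spirit. But the very first step contains a genuine gap. You assert that because $Kv$ is neither separably closed nor real closed, there exists a prime $p$ with $Kv$ not $p$-closed, i.e.\ admitting a Galois extension of degree $p$. This does not follow: a non-separably-closed field has \emph{some} non-trivial finite Galois extension, but its Galois group could be perfect, so there need not be a Galois subextension of prime degree, and you neither prove nor cite the stronger claim. The paper's proof goes out of its way to avoid exactly this assumption. It only uses the trivial dichotomy that $Kv$ has either a Galois extension of degree divisible by some odd prime $p$, or only $2$-power Galois extensions. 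In the odd-$p$ case, since $Kv$ itself may have no Galois $p$-power extension, the paper passes to the fixed field $M$ of a $p$-Sylow subgroup of $\mathrm{Gal}(L/Kv)$ (so that $L/M$ is Galois of $p$-power degree), lifts $M$ to a finite separable extension $F_0$ of $K$, sets $F=F_0(\zeta_p)$, applies the Main Theorem of \cite{jahnkekoenigsmann1} to the canonical $p$-henselian valuation $v^p_F$ on $F$ (whose residue field now genuinely has a $p$-power Galois extension), and then restricts to $K$ using interpretability of $F$ in $K$.

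Two further points are under-argued. First, in the case that $Kv$ has only $2$-power Galois extensions, the paper must invoke \cite[Corollary~3.3]{jahnkekoenigsmann1} and distinguish whether $Kv^2_K$ is Euclidean, using the alternative valuation $v_K^{2*}$ when it is; your phrase ``the hypothesis that $Kv$ is not real closed is exactly what keeps us in scope'' does not engage with this dichotomy. Second, when $\zeta_p\notin K$ and you pass to $K(\zeta_p)$, it is not automatic that the restriction of $v^p_{K(\zeta_p)}$ to $K$ refines $v$; this requires comparing with the unique prolongation of $v$ to $K(\zeta_p)$, as the paper does explicitly with the prolongation $u$.
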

	\begin{proof}

		Note that $Kv$ either has a Galois extension of degree divisible by some prime $p\neq 2$ or it only has Galois extensions of $2$-power degree. 
		
		In the latter case, since $Kv$ is neither separably closed nor real closed, it has Galois extensions of degree $2$ but is not Euclidean.
		Since $v$ is henselian and thus, in particular, $2$-henselian, we can now apply \cite[Corollary 3.3]{jahnkekoenigsmann1} to obtain that it admits an $\Lr$-definable refinement.
		Indeed, let $v_K^2$ denote the canonical $2$-henselian valuation (see \cite[page~743]{jahnkekoenigsmann1}).
		Then by \cite[Corollary 3.3]{jahnkekoenigsmann1}, $v_K^2$ is $\Lr$-definable if
		$Kv_K^2$ is non-Euclidean, otherwise the coarsest $2$-henselian valuation with Euclidean residue field $v_K^{2*}$ is $\Lr$-definable. Either is a refinement of $v$.
		
		Thus, we now assume that there is a prime $p\neq 2$ such that $Kv$
		has a finite Galois extension $L$ of degree divisible by $p$.
		Then there is a
		finite separable extension $M/Kv$ such that $L/M$ is a finite Galois
		extension of degree $p^n$ for some $n>0$ (e.g., take $M$ to be the fixed field of the $p$-Sylow subgroup of $\mathrm{Gal}(L/Kv)$ inside $L$).
		Let $F_0/K$ be a finite separable extension such that the (by henselianity unique) prolongation of $v$ to $F_0$ has residue field $M$ (see \cite[Theorem~5.2.7~(2)]{englerprestel} for the existence of $F_0$).
		Consider $F=F_0$ if the characteristic of $K$ is $p$, and $F = F_0(\zeta_p)$ otherwise,
		where $\zeta_p$ is a primitive $p$-th root of unity. 
		Then $F/K$ is a finite separable extension, and the residue field of the unique prolongation $u$ of $v$ to $F$ is a finite extension of $M$.
		In particular, $Fu$ admits Galois extensions of $p$-power degree, e.g., the compositum of $L$ and $Fu$.
		Therefore $u$ is a henselian (and thus in particular $p$-henselian) valuation with $Fu \neq Fu(p)$, and hence coarsens the canonical $p$-henselian valuation $v_F^p$ of $F$, which is $\emptyset$-$\Lr$-definable in $F$ by the Main Theorem of \cite{jahnkekoenigsmann1}.
		
		Now $F$ is interpretable in $K$ as the splitting field of a separable polynomial (cf.~\cite[page~31]{marker}).
		Hence $w=v_F^p|_K$ is $\Lr$-definable in $K$.
		Lastly, $v=u|_K$ is a coarsening of $w$, as $u$ is a coarsening of $v_F^p$ in $F$. 
	\end{proof}
	
	We can now state our main theorem about definability of henselian valuations on ordered fields.
	In the proof, we will need the following notion:
	for $n\in \N$, we say that a valuation $v$ on a field $K$ is \emph{$n_{\leq}$-henselian} if Hensel's Lemma holds in $(K,v)$ for all polynomials of degree at most $n$.
	Note that for a fixed $n$, the property of $n_{\leq}$-henselianity is elementary in the language $\Lvf$.
	
	\begin{theorem}\thlabel{thm:main}
		Let $(K,<,v)$ be an ordered henselian valued field. If $v$ is $\Lor$-definable, then it is $\Lr$-definable.
	\end{theorem}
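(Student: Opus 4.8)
First I would reduce to the case that $v$ is non-trivial and split according to whether $K$ is almost real closed. Since $(K,<)$ is ordered, $v$ is convex, so the residue field $Kv$ carries an ordering and in particular is not separably closed. If $Kv$ is real closed, then $K$ is almost real closed, and \Autoref{thm:arcmain} directly gives that the $\Lor$-definable valuation $v$ is $\Lr$-definable. So I may assume from now on that $Kv$ is neither separably closed nor real closed.

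In this case \Autoref{prop:defble-refinement}, applied to $(K,v)$, produces an $\Lr$-definable refinement $w$ of $v$; note that $w$ need not be henselian, and that $v$ is an $\Lor$-definable coarsening of $w$. The goal is then to exhibit an $\Lr$-definable \emph{henselian} valuation $w_0$ with $\OO_w\subseteq\OO_{w_0}\subseteq\OO_v$ --- the natural candidate being the finest henselian coarsening of $w$, which exists because $v$ is itself a henselian coarsening of $w$ --- for then \Autoref{cor:coarseninglr}, applied to the $\Lr$-definable henselian valuation $w_0$ and its $\Lor$-definable coarsening $v$, yields that $v$ is $\Lr$-definable. So everything reduces to showing that this $w_0$ is $\Lr$-definable.

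For the latter I would use that, for each fixed $n\in\N$, $n_{\leq}$-henselianity is an $\Lvf$-elementary property: this allows the relevant $n_{\leq}$-henselian coarsenings of $w$ to be pinned down uniformly and $\Lr$-definably in $w$, with \Autoref{lem:pdivsub} supplying the $\Log$-definition --- and hence, since $w$ is $\Lr$-definable and $wK$ is $\Lr$-interpretable in $K$, the $\Lr$-definition --- of the relevant convex subgroups of $wK$; passing to the common refinement over all $n$ should recover $\OO_{w_0}$. \Autoref{lem:glitschig} enters to exclude the degenerate configuration in which $(Kv,<)$ is elementarily a Hahn field over $\Q$ while $vK$ has the shape $\Gamma\oplus\Q$, which is incompatible with $\Lor$-definability of $v$ and would otherwise obstruct this description. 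The main obstacle I anticipate is exactly this step: because $w$ need not be convex with respect to $<$, \Autoref{thm:stablemb} and \Autoref{cor:stablemb} are not available for $(K,<,w)$, so one cannot simply read off $\Log$-definability of the convex subgroup $w(\OO_v^\times)\subseteq wK$ and conclude; the content of the argument is that one can nonetheless descend to the henselian --- hence convex --- $\Lr$-definable valuation $w_0$ using only the elementarity of the approximate Hensel conditions together with the definability of convex subgroups up to $p$-divisible noise, and this requires careful bookkeeping over $n$ and over the finitely many relevant primes.
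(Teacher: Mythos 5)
Your reduction to the non--almost-real-closed case is correct, and you have correctly identified the three main tools (\Autoref{prop:defble-refinement}, \Autoref{lem:pdivsub}, \Autoref{cor:coarseninglr}) as well as the central obstacle: since the refinement $w$ need not be convex, \Autoref{cor:stablemb} is unavailable for $(K,<,w)$, so one cannot just read off $\Log$-definability of $w(\OO_v^\times)$. That observation shows real understanding of why the theorem is not a formal consequence of \Autoref{cor:coarseninglr}. But the step you flag as ``requiring careful bookkeeping'' is in fact where your plan breaks down, and the paper's argument is structured quite differently at exactly that point.

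Two concrete problems. First, ``passing to the common refinement over all $n$'' of the $n_\leq$-henselian coarsenings is not a first-order operation: it is an infinite intersection/union of definable sets, and there is no reason the finest henselian coarsening $w_0$ of $w$ should be $\Lr$-definable at all --- indeed, if it were automatic, one would not need \Autoref{lem:glitschig} in the first place. The paper never tries to define $w_0$. Instead, after using \Autoref{lem:glitschig} to split into Case~1 ($vK\not\equiv\Gamma\oplus\Q$) and Case~2 ($(Kv,<)\not\equiv(L\pow{\Q},<)$), it argues in Case~1 with a \emph{single} prime $p$ (chosen so that $vK$ has no non-trivial $p$-divisible convex subgroup) and the uniform family $u_\gamma$ of coarsenings given by \Autoref{lem:pdivsub}; if no $u_\gamma$ with $\gamma\in\Delta_v$ is ever henselian, it either recovers $\Delta_v$ directly as $w(B)$ for a definable $B$ (bypassing any intermediate henselian valuation), or passes to a saturated elementary extension $(K^*,<,v^*)$ where \emph{some} $u^*$ in the family actually becomes henselian and then pulls $\Lr$-definability of $v^*$ back to $v$. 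Your sketch contains neither the elementary-extension trick nor the ``directly define $\Delta_v$'' alternative. Second, you describe \Autoref{lem:glitschig} as excluding one degenerate configuration that ``would otherwise obstruct this description''; in the paper it instead yields a genuine dichotomy, and Case~2 is handled by an entirely different argument: one analyses the induced valuation $\overline{v_K}$ on $Kv$, shows $\Delta=\overline{v_K}(Kv)$ is $p$-antiregular for some prime, applies \Autoref{lem:pdivsub} with $\gamma\in\Delta_v\setminus\Delta_{v_K}$ to squeeze an $\Lr$-definable \emph{henselian} refinement of $v$ between $\Delta_{v_K}$ and $\Delta_v$, and for $v=v_K$ invokes $t$-henselianity and another elementary-extension argument. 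This residue-field branch has no analogue in your proposal. (A minor further point: the paper applies \Autoref{prop:defble-refinement} to $v_K$ rather than to $v$, so that $w$ refines $v_K$ and the convex subgroup $\Delta_{v_K}\leq wK$ is available; your version applied to $v$ does not obviously supply this.)
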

	
	\begin{proof}
		If $v$ is trivial, then the proof is clear, thus we assume that $v$ is non-trivial from now on. 
		If $K$ is almost real closed, then the result follows from \Autoref{thm:arcmain}, hence we also assume that $K$ is not almost real closed.
		Let $v_K$ denote the canonical henselian valuation on $K$, i.e., the finest henselian valuation on $K$. In particular, $v_K$ is a (not necessarily proper) refinement of $v$.
		The residue field $Kv_K$ carries an ordering induced by the ordering on $K$, but is not real closed since $K$ is not almost real closed by assumption.
		By \Autoref{prop:defble-refinement}, we may thus fix an $\Lr$-definable refinement $w$ of $v_K$ (and hence of $v$).
		
		\smallskip
		Now, by \Autoref{lem:glitschig}, 
		we can make the following case distinction.
		
		\textbf{Case 1:} \emph{$vK \not\equiv \Gamma \oplus \Q$ for any ordered abelian group $\Gamma$.}

		Let $\Delta_v\leq wK$ be the convex subgroup such that $wK/\Delta_v=vK$. 
		We first show that there is a prime $p$ such that $vK$ contains no non-trivial $p$-divisible convex subgroup.
		Assume for a contradiction that $vK$ contains a non-trivial $p$-divisible convex subgroup for every prime $p$.
		Then any sufficiently saturated elementary extension $G^*$ of $vK$ contains a non-trivial convex divisible subgroup $Q$.
		Now, \cite[Lemma~1.11]{schmitt} implies 
		$$vK\equiv G^*\equiv G^*/Q\oplus Q\equiv G^*/Q\oplus \Q,$$ 
		contradicting that we are in Case 1.
		
		Hence, we can fix some prime $p$ such that $vK$ does not contain any non-trivial $p$-divisible convex subgroup.
		By \Autoref{lem:pdivsub}, there exists an $\Log$-formula $\varphi(x,y)$ such that for any positive $\gamma \in wK$, the subgroup $\Delta_\gamma$ of $wK$ defined by $\varphi(x,\gamma)$ is the maximal convex subgroup of $wK$
		containing $\langle \gamma \rangle$ and such that 
		$\Delta_\gamma/\langle \gamma \rangle$ is $p$-divisible.
		In case we choose $\gamma \in \Delta_v$, we have
		$\Delta_\gamma\leq \Delta_v$: otherwise
		$$ \langle \gamma \rangle \leq \Delta_v \lneq \Delta_\gamma
		$$
		implies that $\Delta_\gamma/\Delta_v \leq vK$ is a non-trivial
		convex subgroup which is $p$-divisible since it is a quotient of
		the $p$-divisible group $\Delta_\gamma/\langle \gamma \rangle$.
		
		For every $\gamma\in wK$, let $u_\gamma$ be the $\Lr$-definable coarsening of $w$ on $K$ with value group $wK/\Delta_\gamma$.
		Since $\Delta_\gamma$ is uniformly $\Log$-definable in $wK$, also $u_\gamma$ is uniformly $\Lr$-definable in $K$, i.e., there exists an $\Lr$-formula $\psi(x,\ul{y},z)$ and a parameter tuple $\ul{b}\in K$ such that for every $a\in K^\times$ the formula $\psi(x,\ul{b},a)$ defines $u_{w(a)}$.
		
		If $u_\gamma$ is already henselian for some $\gamma\in \Delta_v$, then $v$ is an $\Lor$-definable coarsening of an $\Lr$-definable henselian valuation and hence $v$ is $\Lr$-definable by \Autoref{cor:coarseninglr}. Thus, we assume that for every $\gamma\in \Delta_v$ the valuation $u_\gamma$ is not henselian.
		
		First suppose that there is some $n\in \N$ such that for every $\gamma\in \Delta_v$ we have that $u_\gamma$ is not $n_\leq$-henselian. Let $B$ be the $\Lr$-definable subset of $K$ consisting of all $a\in K^\times$ such that $u_{w(a)}$ is not $n_\leq$-henselian.
		We claim that $w(B)=\Delta_v$ holds.
		Let $a\in K^\times$ and set $\gamma=w(a)$. First suppose that $\gamma \in \Delta_v$. Then $u_\gamma=u_{w(a)}$ is not $n_\leq$-henselian. Thus, $a\in B$ and $\gamma\in w(B)$. Conversely, suppose that $\gamma \notin \Delta_v$. Then $\Delta_v \lneq \Delta_\gamma$ and thus $u_\gamma$ is a strict coarsening of $v$. Since $v$ is henselian, $u_\gamma$ is $n_\leq$-henselian. Hence, $a\notin B$ and $\gamma\notin w(B)$, as required.
		Thus, in this case $v$ is $\Lr$-definable as 
		$\OO_v$ consists exactly of all $x\in K$ with $w(x)\geq 0\vee w(x)\in w(B)$, which is an $\Lr$-definable condition as $w$ is $\Lr$-definable.
		
		Now suppose that for every $n\in \N$ there exists $\gamma_n\in \Delta_v$ such that $u_{\gamma_n}$ is $n_\leq$-henselian. 
		Then for every $n\in \N$, there is some $a_n\in K$ (with $w(a_n)=\gamma_n$) such that  $\psi(x,\ul{b},a_n)$ defines an $n_\leq$-henselian refinement of $v$ in $(K,<,v)$. 
		Hence, in some sufficiently saturated elementary extension $(K^*,<,v^*)$ of $(K,<,v)$, there exists $a\in K^*$ such that $\psi(x,\ul{b},a)$ defines a henselian refinement $u^*$ of $v^*$. Since $v^*$ is $\Lor$-definable by the same formula in $K^*$ as $v$ in $K$, it is an $\Lor$-definable coarsening of the $\Lr$-definable henselian valuation $u^*$. Hence, $v^*$ is $\Lr$-definable in $K^*$ by \Autoref{cor:coarseninglr} and thus also $v$ is $\Lr$-definable in $K$.
		
		\smallskip
		\textbf{Case 2:} 
		$(Kv,<)\not\equiv (L\pow{\Q},<)$ for any ordered field $(L, <)$.

		First suppose that $v\neq v_K$. Then $v_K$ is strictly finer than $v$. 
		Assume that $\overline{v_K}(Kv)$ is divisible,
		where $\overline{v_K}$ denotes the valuation induced by $v_K$ on $Kv$. 
		Then $\overline{v_K}(Kv)$ is elementarily equivalent to $\Q$ as an ordered abelian group, and thus
		we obtain
		\begin{align*}
			(Kv,<)&\equiv (Kv_K\pow{\ol{v_K}(Kv)},<)\equiv(Kv_K\pow{\Q},<),
		\end{align*}
		in contradiction to the assumption of Case 2.
		
		Therefore we can assume that $\overline{v_K}(Kv)=\Delta$ is non-divisible. We show that there exists some $\Lr$-definable henselian refinement $u_\gamma$ of $v$.
		
		We may assume that
		$\Delta$ does not have a rank $1$ quotient: otherwise we could consider a sufficiently saturated elementary extension $(K^*,<^*, v^*,w^*)$
		of $(K, <, v,w)$ in which --
		by the definability of the refinement $w$ of $v_K$ -- $\overline{w^*}(K^*v^*)$ (and hence also $\overline{v_{K^*}}(K^*v^*)$)
		has no rank $1$ quotient. Just as in Case 1, an $\Lr$-definition of $v^*$
		would give rise to an $\Lr$-definition of $v$.
		We claim that there is a prime $p$ such that $\Delta$ has no non-trivial $p$-divisible quotient.
		If not, then some saturated elementary extension $\Delta^*$ of $\Delta$ has a divisible non-trivial quotient $\Delta^*/\Gamma$, where $\Gamma$ is a convex proper subgroup.
		Then, as before, we have 
		\begin{align*}(Kv, <) &\equiv (Kv_K\pow{\ol{v_K}(Kv)},<) = (Kv_K\pow{\Delta}, <) \\&\equiv (Kv_K\pow{\Delta^*}, <)\equiv (Kv_K\pow{\Gamma}\pow{\Delta^*/\Gamma}, <) \equiv (Kv_K\pow{\Gamma}\pow{\Q}, <),
		\end{align*}
		contradicting that we are in Case 2. 
		Hence, there is a prime $p$ such that $\Delta$ is $p$-antiregular, i.e.,
		$\Delta$ has no non-trivial $p$-divisible quotient and no rank $1$ quotient 
		(cf.~\cite[page~670]{jahnkekoenigsmanncoarsenings}). 
		
		Recall that $w$ is an $\Lr$-definable refinement of $v_K$. Then, there
		are convex subgroups $$\Delta_{v_K} \lneq \Delta_v \leq wK$$ with
		$v_KK=wK/\Delta_{v_K}$ and $vK=wK/\Delta_v$. 
		
		Let $\gamma \in \Delta_v \setminus \Delta_{v_K}$ be positive, and let $\langle \gamma \rangle \leq wK$ denote the smallest convex subgroup containing $\gamma$. Since the convex subgroups of $wK$
		are ordered by inclusion, we have
		$$\Delta_{v_K} \lneq \langle \gamma \rangle \leq \Delta_v.$$
		
		Note that $\langle \gamma \rangle$ need not be $\Log$-definable in $wK$.
		However, by \Autoref{lem:pdivsub}, the maximal convex subgroup $\Delta_{\gamma} \leq wK$ that contains $\langle \gamma \rangle$ and such that $\Delta_{\gamma}/ \langle \gamma \rangle$
		is $p$-divisible
		is $\Log$-definable in $wK$. 
		
		We claim that $\Delta_\gamma \leq \Delta_v$, i.e.,
		that $\Delta_\gamma$ corresponds to an $\Lr$-definable refinement
		of $v$. Assume for a contradiction that we have $\Delta_v \lneq
		\Delta_\gamma$. Since $\Delta_v$ contains $\langle \gamma \rangle$, the choice of $\Delta_\gamma$ implies that $\Delta_v/\langle \gamma \rangle$
		is $p$-divisible. If $\langle \gamma \rangle \neq \Delta_v$, then $\Delta_v/ \langle \gamma \rangle$ is a non-trivial
		$p$-divisible quotient of $\Delta = \Delta_v/\Delta_{v_K}$, a contradiction. Otherwise, $\langle \gamma \rangle = \Delta_v$ and the quotient of $\langle \gamma \rangle$ by its maximal convex subgroup not containing $\gamma$ is of rank $1$, also a contradiction.
		Thus, we have found an $\Lr$-definable refinement of $v$. Since
		we have $$\Delta_{v_K} \lneq \Delta_\gamma,$$
		this refinement is a coarsening of $v_K$ and thus henselian.

		Now suppose that $v=v_K$. If $Kv$ is not $t$-henselian, then $v$ is $\Lr$-definable by \cite[Proposition~5.5]{fehmjahnke}. Hence, suppose that $Kv$ is $t$-henselian.
		Then for a sufficiently saturated elementary extension $(K^\ast, <^\ast, v^\ast) \succeq (K,<,v)$ the residue field $K^\ast v^\ast \succeq Kv$ is itself henselian.
		Since $v$ is $\Lor$-definable in $K$, also $v^\ast$ is $\Lor$-definable in $K^\ast$.
		However, since $K^\ast v^\ast$ is henselian, $v^\ast$ is not the canonical henselian valuation of $K^\ast$, and therefore by the arguments above $v^\ast$ is already $\Lr$-definable in $K^\ast$.
		Hence, also $v$ is $\Lr$-definable in $K$.
	\end{proof}


\begin{thebibliography}{99}

		\bibitem{aschenbrenner}
		\textsc{M.~Aschenbrenner}, \textsc{L.~van den Dries} and \textsc{J.~van der Hoeven},
		\textsl{Asymptotic Differential Algebra and Model Theory of Transseries}, Ann.\ of Math.\ Stud.\ 195 (Princeton Univ.\ Press, Princeton, NJ, 2017),
		doi:10.1515/9781400885411.
		
		\bibitem{becker}
		\textsc{E.~Becker}, \textsc{R.~Berr} and \textsc{D.~Gondard},
		`Valuation Fans and Residually Real Closed Henselian Fields',
		\textsl{J.\ Algebra} 215 (1999) 574--602,
		doi:10.1006/jabr.1998.7708.
		
		\bibitem{brown}
		\textsc{R.~Brown},
		`Automorphisms and isomorphisms of real Henselian fields',
		\textsl{Trans.\ Amer.\ Math.\ Soc.} 307 (1988) 675--703,
		doi:10.2307/2001193.
		
		\bibitem{hensel-min}
		\textsc{R.~Cluckers}, \textsc{I.~Halupczok} and \textsc{S.~Rideau-Kikuchi},
		`Hensel minimality I',
		\textsl{Forum Math. Pi} 10:e11 (2022) 1--68,
		doi:10.1017/fmp.2022.6.
		
		\bibitem{delon} 
		\textsc{F.~Delon} and \textsc{R.~Farré}, 
		`Some model theory for almost real closed fields', 
		\textsl{J.\ Symb.\ Log.} 61 (1996) 1121--1152, 
		doi:10.2307/2275808.
		
		\bibitem{dries}
		\textsc{L.~van~den~Dries},
		`Lectures on the Model Theory of Valued Fields',
		\textsl{Model Theory in Algebra, Analysis and Arithmetic}, Lect.\ Notes Math.\ 2111 (eds D.~Macpherson and C.~Toffalori; Springer, Heidelberg, 2014) 55--157, doi:10.1007/978-3-642-54936-6\_4.
		
		\bibitem{englerprestel} 
		\textsc{A.~J.~Engler} and \textsc{A.~Prestel},  
		\textsl{Valued Fields}, 
		Springer Monogr. Math. (Springer, Berlin, 2005).
		
		\bibitem{farre} 
		\textsc{R.~Farré}, 
		`A transfer theorem for Henselian valued and ordered fields', 
		\textsl{J. Symb. Log.} 58 (1993) 915--930, 
		doi:10.2307/2275104.	
		
		\bibitem{fehmjahnke} 
		\textsc{A.~Fehm} and \textsc{F.~Jahnke},
		`On the quantifier complexity of definable canonical Henselian valuations', 
		\textsl{Math. Log. Q.} 61 (2015) 347--361,
		doi:10.1002/malq.201400108.	
		
		\bibitem{fehmjahnkesurvey} 
		\textsc{A.~Fehm} and \textsc{F.~Jahnke}, 
		`Recent progress on definability of Henselian valuations', 
		\textsl{Ordered Algebraic Structures and Related Topics}, Contemp. Math. 697 (eds F.~Broglia, F.~Delon, M.~Dickmann, D.~Gondard-Cozette and V.~A.~Powers; Amer.
		Math. Soc., Providence, RI, 2017), 135--143,
		doi:10.1090/conm/697/14049.
		
		\bibitem{hongthesis} 
		\textsc{J.~Hong}, 
		`Immediate expansions by valuations of fields', 
		Doctoral Thesis, McMaster University, 2013.
		
		\bibitem{hong} 
		\textsc{J.~Hong}, 
		`Definable non-divisible Henselian valuations', 
		\textsl{Bull. Lond. Math. Soc.} 46 (2014) 14--18, 
		doi:10.1112/blms/bdt074.			

		\bibitem{jahnkekoenigsmann1} 
		\textsc{F.~Jahnke} and \textsc{J.~Koenigsmann}, 
		`Uniformly defining $p$-henselian valuations', 
		\textsl{Ann. Pure Appl. Logic} 166 (2015) 741--754, 
		doi:10.1016/j.apal.2015.03.003.
		
		\bibitem{jahnkekoenigsmanncoarsenings} 
		\textsc{F.~Jahnke} and \textsc{J.~Koenigsmann}, 
		`Defining coarsenings of valuations', 
		\textsl{Proc. Edinb. Math. Soc. (2)} 60 (2017) 665--687,
		doi:10.1017/S0013091516000341.	
		
		\bibitem{jahnkesimonwalsberg} 
		\textsc{F.~Jahnke}, \textsc{P.~Simon} and \textsc{E.~Walsberg}, 
		`Dp-minimal valued fields', 
		\textsl{J. Symb. Log.} 82 (2017) 151--165, 
		doi:10.1017/jsl.2016.15.
		
		\bibitem{jahnkesimon}
		\textsc{F.~Jahnke} and \textsc{P.~Simon},
		`NIP henselian valued fields', 
		\textsl{Arch. Math. Logic} 59 (2020) 167--178, 
		doi:10.1007/s00153-019-00685-8.	
		
		\bibitem{johnson}
		\textsc{W.~Johnson}, 
		`Dp-finite fields VI: the dp-finite Shelah conjecture', Preprint, 2020,
		arXiv:2005.13989.
		
		\bibitem{krappkuhlmannlehericyforum} 
		\textsc{L.~S.~Krapp}, \textsc{S.~Kuhlmann} and \textsc{G.~Lehéricy},
		`Ordered fields dense in their real closure and definable convex valuations', \textsl{Forum Math.} 33 (2021) 953--972,  
		doi:10.1515/forum-2020-0030. 
		
		\bibitem{krappkuhlmannlink} 
		\textsc{L.~S.~Krapp}, \textsc{S.~Kuhlmann} and \textsc{M.~Link},
		`Definability of henselian valuations by conditions on the value group', to appear in \textsl{J.\ Symb.\ Log.}, 2022,
		doi:10.1017/jsl.2022.34.
		
		\bibitem{malcev}
		\textsc{A.~I.~Mal'cev},
		`On the undecidability of elementary theories of certain fields', \textsl{Sibirsk.\ Mat.\ Ž.} 1 (1960) 71--77 (Russian),
		\textsl{Amer.\ Math.\ Soc.\ Translat., II.\ Ser.} 48 (1965) 36--43 (English),
		doi:10.1090/trans2/048.
		
		\bibitem{marker} 
		\textsc{D.~Marker},
		\textsl{Model Theory: An Introduction}, 
		Grad.\ Texts in Math.\ 217 (Springer, New York, 2002), 
		doi:10.1007/b98860.
		
		\bibitem{schmitt}
		\textsc{P.~H.~Schmitt},
		`Model theory of ordered abelian groups',
		Habilitationsschrift, Ruprecht-Karls-Universität Heidelberg, 1982.
		
		\bibitem{viswanathan} 
		\textsc{T.~M.~Viswanathan}, 
		`Ordered fields and sign-changing polynomials', 
		\textsl{J. Reine Angew. Math.} 296 (1977) 1--9, 
		doi:10.1515/crll.1977.296.1.
		
	\end{thebibliography}
\end{document}